\newtheorem{theorem}{Theorem}[section]
\newtheorem{definition}[theorem]{Definition}
\newtheorem{lemma}[theorem]{Lemma}
\newtheorem{corollary}[theorem]{Corollary}
\newtheorem{conjecture}[theorem]{Conjecture}
\newtheorem{proposition}[theorem]{Proposition}
\newtheorem{remark}[theorem]{Remark}
\newtheorem{example}[theorem]{Example}
\newcommand{\by}{{\bf y}}
\newcommand{\bx}{{\bf x}}
\newcommand{\be}{{\bf e}}
\newcommand{\bu}{{\bf u}}
\def\dir{\curvearrowright} 
\def\dirl{\rightsquigarrow} 
\newcommand{\T}{{\sf T}\,}
\def\|{{\,|\, }}
\def\d{{\,\sf d}}
\newcommand{\bv}{{\bf v}}
\newcommand{\I}{{\bf I}}
\newcommand{\ts}{{\tilde\sigma}}
\newcommand{\F}{{\mathcal{F}}}
\newcommand{\Z}{\mathbb{Z}}
\newcommand{\R}{\mathbb{R}}
\newcommand{\C}{\mathbb{C}}
\newcommand{\eps}{\varepsilon}
\def\a{\alpha}
\def\b{\beta}
\def\l{\lambda}
\renewcommand{\P}{\mathbb{P}}
\newcommand{\E}{\mathbb{E}}
\newcommand{\A}{{\bf A}}
\begin{document}

\title{Linear competition processes and generalized P\'olya urns with removals}

\author{
Serguei Popov\footnote{Centro de Matem\'atica, University of 
Porto, Porto, Portugal.
Email address: serguei.popov@fc.up.pt}
\and 
Vadim Shcherbakov\footnote{Department of Mathematics, Royal Holloway, University of London, UK. Email address: vadim.shcherbakov@rhul.ac.uk}
\and Stanislav Volkov\footnote{Centre for Mathematical Sciences, Lund University, Sweden. Email address: stanislav.volkov@matstat.lu.se}~\footnote{Research supported by Crafoord grant no.~20190667 and Swedish Research Council grant VR~201904173}
}
\maketitle

\begin{abstract} 
A competition process is a continuous time Markov chain that can be interpreted as a system of interacting birth-and-death processes, the components of which evolve subject to a competitive interaction. This paper is devoted to the study of the long-term behaviour of such a competition process, where a component of the process increases with a linear birth rate and decreases with a rate given by a linear function of other components. A zero is an absorbing state for each component, that is, when a component becomes zero, it stays zero forever (and we say that this component becomes extinct). We show that, with probability one, eventually only a random subset of non-interacting components of the process survives. A similar result also holds for the relevant generalized P\'olya urn model with removals.
\end{abstract}

\noindent {{\bf Keywords:} birth-and-death process, competition process, branching process, generalized P\'olya urn with removals,
 martingale.}

\noindent {{\bf Subject classification:} 60K35, 60G50}


\section{Introduction}
A classical birth-and-death process on the set of non-negative integers is a continuous time Markov chain (CTMC) which evolves as follows. When the process is at state $k$, 
it can jump either to state $k+1$, or to state $k-1$ (provided $k>0$), with transition rates that are state-dependent. The long term behaviour of the birth-and-death process is well studied. Given a set of transition rates one can, in principle, determine whether the corresponding CTMC is (positive) recurrent or (explosive) transient, and compute various other characteristics of the process.

A multivariate birth-and-death process is a CTMC with values in a multi-dimensional non-negative orthant, and the dynamics of which is similar to that of the classical birth-and-death process. A multivariate birth-and-death process can often be interpreted as a {\em system of interacting} one-dimensional birth-and-death processes. A competition process is, probably, the most known example of such Markov chains. For instance, competition processes with non-linear interaction (e.g., of the Lotka-Volterra type) were originally proposed to model competition between populations (please see \cite{Anderson}, \cite{Iglehart}, \cite{Reuter}, \cite{Renshaw} and references therein). 
 
In contrast to the one-dimensional case, the long term behaviour of multivariate birth-and-death processes is much less known, even though results are available in some special cases. While we do not provide a complete review of the relevant literature, we would like to mention the papers~\cite{Hutton}, \cite{Kesten1972} \cite{Kesten1976}, and \cite{Mode}, in addition to the references above, where the technical framework is somewhat close to that of the present paper. The approach to studying a multivariate birth-and-death process depends on a particular model. For example, it is well known that reversibility greatly facilitates the study of the long term behaviour of the birth-and-death process (e.g. see~\cite{Karlin}). This is also the case in the multivariate situation (\cite{JShV19} and~\cite{ShV15}). On the other hand, in the non-reversible case the Lyapunov function method (\cite{MPW}) is widely used. The method has been applied to studying the long term behaviour of the multivariate birth-and-death processes since the 1960s (see \cite{Reuter}), in order to establish recurrence vs.\ transience, as well as to detect some more subtle phenomena (\cite{MS}, \cite{ShV19}).

In the current paper we study the long term behaviour of a {\it linear} competition process:
components increase as linear pure birth processes and decrease with a death rate, given by a linear function depending on other components. The functions determining death rates are, in turn, determined by a non-negative matrix, called {\em the interaction matrix.} When a component of the process becomes zero, it stays zero forever (becomes extinct); in other words, zero is an absorbing state for each component. If a component of the process never becomes zero, we say it {\em survives}.

The main result of the paper is the following. With probability one, eventually only a random subset of the components of the process survive. Every limit set of survivals is formed by mutually non-interacting components, so that the survived components evolve as independent linear pure birth processes (Yule processes). This result can be equivalently stated in terms of the discrete time Markov chain corresponding to the linear competition process (the embedded Markov chain). The embedded Markov chain can be regarded as an urn model with removals, where balls of several types are added to and removed from the urn with probabilities induced by the transition rates of the competition process. Hence, with probability one, eventually only balls of a random subset of types will be left in the urn (survive). The numbers of balls of the surviving  types will evolve according to the classical generalised P\'olya urn model. 

A crucial step in our proof is to show that, with probability one, eventually one of the interacting components becomes extinct. Showing this fact is straightforward, provided that the competition is sufficiently strong. This is similar to the models with non-linear competitive interaction, where strong interactions generate a sufficient drift directed towards the boundary. At the same time, more subtle phenomena, such as quasi-stationary distributions or extinction probabilities, are of primary interest in those models (e.g. see \cite{NikolayDenis},~\cite{Rachel},~ \cite{Meleard} and references therein). 

Showing extinction is much harder when the interaction is weak. It turns out that the phase transition in the strength of the interaction is determined by the largest eigenvalue of the interaction matrix. This fact is not at all surprising, since the dynamics of the linear competition process has a striking resemblance with that of multi-type branching processes (MTBP), where eigenvalues (the largest one, in particular) of the mean drift matrix play a crucial role. This similarity allows us to adopt the well-known method for studying both MTBPs and urn-related models (\cite{Athreya1},~\cite{Athreya2},~\cite{Svante2004}). In particular, the scalar products of eigenvectors of the interaction matrix and the embedded Markov chain provide us with useful semimartingales.

The rest of the paper is organised as follows. In Section~\ref{sec-main} we state the model and the results rigorously. In Section~\ref{sec-prelim} we prepare all necessary ingredients for the proof of the main results, which are given in Section~\ref{sec-proofs}. Section~\ref{sec-lemmas_proofs} contains the proofs of the lemmas, and in Section~\ref{sec-examples} we describe some interesting examples.

\section{The model and the main result}
\label{sec-main}
Let $\Z_{+}$ be the set of all non-negative integers, and let $\R_{+}$ be the set of all non-negative real numbers, both including zero. For a vector $\bx=(x_1,\dots,x_N)\in \R^N$ we will write $\bx>0$ whenever all $x_i>0$. A vector $\bx=(x_1,\dots,x_N)\in \R^N$ is understood as a column vector, so that $\bx^{\T}$ is a row vector. Further, $\bx\cdot\by=\bx^{\T}\by$ denotes a Euclidean scalar product of vectors $\bx$ and $\by$, and $1_{D}$ denotes an indicator of an event (or set) $D$. All random variables are realised on a certain probability space $(\Omega, \F, \P)$. The expectation with respect to the probability $\P$ will be denoted by $\E$. 
The real part and the imaginary part of a complex number $z$ will be denoted by 
$\Re(z)$ and  $\Im(z)$ respectively.

\begin{definition}
\label{D1}
Fix an integer $N\ge 1$. An $N\times N$ matrix $\A=(a_{ij})$ with non-negative elements and zeros on the main diagonal is called {\em an interaction matrix}. 
\end{definition}

Given a number $\alpha>0$ and an interaction matrix $\A=(a_{ij})$ consider a CTMC $X(t)=\left(X_{1}(t),\ldots, X_{N}(t)\right)\in\Z_{+}^{N}$, $t\in\R_{+}$, with the following transition rates 
\begin{equation}
\label{rates}
q_{\bx\by}=\begin{cases}
\alpha x_i,& \by=\bx+\be_i;\\
\left(\sum_{j=1}^N a_{ij} x_j\right)1_{\{x_i>0\}},& \by=\bx-\be_i,
\end{cases}
\end{equation}
where $\bx=(x_1,\ldots ,x_N)$, $\by\in \Z_{+}^{N}$, and $\be_i$ is the $i$-th 
unit vector in $\Z_{+}^{N}$, i.e. a vector such that its $i$-th component is 
equal to $1$ and all its other components are equal to $0$. In what follows, we 
refer to a CTMC $X(t)$ with transition rates~\eqref{rates} as a linear 
competition process (LCP).

\begin{remark}
{\rm The quantity $a_{ij}\ge 0$ indicates how much component~$i$ is affected by component~$j$. In biological terms, the fact that $a_{ij}>0$ can be interpreted as a predator~$j$ hunting prey~$i$. }
\end{remark}

\begin{remark}
{\rm If $\A= {\bf 0}$, then the LCP $X(t)$ is a collection of independent pure linear birth processes with parameter $\alpha$. The latter means that if a component is at state $k>0$, then it can only jump to state $k+1$ with rate $\alpha k$. Such a process is also known as Yule process (see e.g.~\cite{Karlin}). In general, CTMC $X(t)$ is a special case of the so called competition process (see the references above) and can be interpreted as a system of interacting birth-and-death processes with linear interaction. }
\end{remark}

Let $\zeta(n)=(\zeta_1(n),\ldots ,\zeta_{N}(n))\in\Z_{+}^{N}$, $n\in \Z_{+}$, be 
the embedded Markov chain (the embedded process) corresponding to the LCP 
$X(t)$. In other words, $\zeta(n)$ is a discrete time Markov chain (DTMC) with 
the following transition probabilities
\begin{equation}
\label{embedded}
\begin{split}
\P\left(\zeta(n+1)=\zeta(n)+\be_i\,\big|\F_n\right)&=\frac{\alpha \zeta_i(n)}{R(\zeta(n))},\\
\P\left(\zeta(n+1)=\zeta(n)-\be_i\,\big|\F_n\right)&=
\frac{\sum_{j=1}^N a_{ij}\zeta_j(n)}{R(\zeta(n))}\, 1_{\{\zeta_i(n)>0\}},
\end{split}
\end{equation}
where $\F_n$ is the natural filtration generated by $\zeta(k),\, k\leq n,$ and 
\begin{equation}
\label{R}
R(\zeta)=\sum_{i=1}^{N}\left(\alpha \zeta_i+ 1_{\{\zeta_i>0\}}\sum_{j=1}^N a_{ij} \zeta_j\right)\quad\text{for}\quad
\zeta=(\zeta_1,\ldots ,\zeta_N)\in\Z_{+}^N.
\end{equation}

\begin{remark}
{\rm 
Note that the DTMC $\zeta(n)=(\zeta_1(n),\ldots ,\zeta_{N}(n))$ can be regarded 
as an urn model with removals, where $\zeta_i(n)$ is interpreted as a number of 
balls of type $i$.
}
\end{remark}

Before we formulate the main theorem, we need to introduce a few definitions from the graph theory. Observe that the transposed interaction matrix $\A^\T$ can be regarded as a weighted adjacency matrix of a {\it directed} graph $G$ defined below.

\begin{definition}
\label{D2}
The graph $G=G(\A)$ corresponding to the interaction matrix $\A$ is a loopless 
directed graph~$G$ with the vertex set $V=\{1,\ldots ,N\}$, where vertices $i$ 
and $j$ are connected by a directed edge (written as $i\dir j$) if and only if 
$a_{ji}>0$.
\end{definition}

\begin{definition}
\label{Dpath}
Let $G=(V, E)$ be a directed graph with vertex set $V$ and edge set $E$.
\begin{enumerate}
\item We say that there is a directed path from $v\in V$ to $w\in V$ and write $v\dirl w$, if there exists a sequence of vertices $v=v_1$, $v_2$, $\dots$, $v_k=w$ of $G$ such that $v_{i}\dir v_{i+1}$ for $i=1,2,\dots,k-1$. 
\item We call a non-empty directed graph $G$ strongly connected if it either consists of just one vertex, or if any two distinct vertices~$v,w\in G$ satisfy $v\dirl w$ and $w\dirl v$. Equivalently, if $G=G(\A)$, then this is equivalent to the irreducibility of matrix $\A$, i.e.\ the matrix ${\bf I}+\A+\A^2+\dots+\A^n$ is strictly positive for some sufficiently large $n$.
\end{enumerate}
\end{definition}

\begin{definition}\label{D4}
Let $G=(V, E)$ be a directed graph with vertex set $V$ and edge set $E$.
\begin{enumerate}
\item Given a subset of vertices $V'\subset V$ the corresponding induced subgraph is graph $G' = (V', E')$ with edge set $E'$ inherited from graph $G$.
\item Let $G'\subset G$ be a subgraph induced by a non-empty subset of vertices $V'\subset V$. The subgraph $G'$ is called a source subgraph, if there are no $v\in V\setminus V'$ and $v'\in V'$ such that $v\dir v'$ (i.e., there are no edges incoming to $G'$).
\end{enumerate}
\end{definition}

\begin{remark}
{\rm If the directed graph $G$ is disconnected, then the corresponding linear competition process will behave independently on each of the connected components of $G$, with the transition rates appropriate for that component (of course, with a different sub-matrix of $\A$). Also, whenever one of the components of the process ($X_i$ or $\zeta_i$ respectively) becomes zero, this is equivalent to removing the corresponding vertex $i$ from the vertex set of $G$, along with all the edges incoming to or outgoing from $i$ (that is, crossing out simultaneously the $i$th row and the $i$th column from~$\A$). As a result, a connected component of $G$ containing vertex $i$ might split into more than one connected components. }
\end{remark}

Theorem~\ref{T1} below is the main result of the paper.

\begin{theorem}
\label{T1}
Let $X(t)=(X_1(t),\ldots ,X_N(t))\in\Z_{+}^N,\, t\in\R_{+},$ be a LCP with 
transition rates~\eqref{rates} specified by a parameter $\alpha>0$ and an 
interaction matrix $\A$. Let 
$\zeta(n)=(\zeta_1(n),\ldots ,\zeta_{N}(n))\in\Z_{+}^{N}$, \, $n\in \Z_{+}$, be 
the corresponding embedded DTMC with transition probabilities~\eqref{embedded}. 

Suppose that $X(0)=\zeta(0)>0$. Then, for every subset ${\cal 
I}=\{i_1,i_2,\dots, i_K\}\subseteq \{1,\ldots ,N\}$ such that $a_{ij}=a_{ji}=0$ 
for all $i, j\in {\cal I}$ and containing
{ at least one vertex from each strongly connected subgraph of $G(\A)$}
\begin{align*}
\lim_{t\to \infty}X_i(t)=\lim_{n\to \infty}\zeta_i(n)=
\begin{cases}
\infty, &\text{ if } i\in {\cal I};\\
0, &\text{ if }\, i\notin{\cal I}
\end{cases}
\end{align*}
with positive probability. 

No other limiting behaviour is possible. That is, with probability one, a random subset of non-interacting components of the process $X(t)$ survives, and the survived components behave as independent Yule processes with parameter $\alpha$. As a result, for large $n$ the process $\{\zeta_{i}(n),\, i\in {\cal I}\}$ has the same distribution as the classical P\'olya urn with $K$ different types of balls.
\end{theorem}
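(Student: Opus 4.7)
My plan is to reduce the theorem to a single extinction lemma---``in any strongly connected interacting sub-system some component dies a.s.''---and then bootstrap by repeatedly peeling off vertices from the interaction graph. I will work throughout with the embedded chain $\zeta(n)$, since $X(t)$ is dominated by a pure Yule process and hence non-explosive, so the two share their long-term behaviour. The easy half of the theorem, that every limit survivor set is pairwise non-interacting, follows directly: if $i,j$ both have $\zeta_i,\zeta_j\to\infty$ and $a_{ij}>0$, then by~\eqref{embedded} the probability of the down-step $-\be_i$ is at least of order $a_{ij}\zeta_j(n)/R(\zeta(n))$, while the up-step $+\be_i$ has probability only $\alpha\zeta_i(n)/R(\zeta(n))$, and a domination/Borel--Cantelli argument forces $\zeta_i$ to hit zero. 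Consequently any limit survivor set $\mathcal{I}$ must satisfy $a_{ij}=a_{ji}=0$ for all $i,j\in\mathcal{I}$.

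The central step will be the extinction lemma: if $H\subseteq\{1,\dots,N\}$ is the vertex set of a strongly connected induced subgraph of $G(\A)$ with $|H|\ge 2$, and at some stopping time $\tau$ every $\zeta_i$ with $i\in H$ is positive while no currently-alive vertex outside $H$ points into $H$, then a.s.\ some $\zeta_i$, $i\in H$, hits zero after $\tau$. I would analyse the scalar product $Z_n=\bu_H^\T\zeta_H(n)$, where $\bu_H>0$ is the left Perron--Frobenius eigenvector of the irreducible submatrix $\A_H$ with eigenvalue $\lambda_H>0$; as long as no coordinate of $\zeta_H$ has died,
\begin{align*}
\E[Z_{n+1}-Z_n\,|\,\F_n]=\frac{(\alpha-\lambda_H)\,Z_n}{R(\zeta(n))}.
\end{align*}
In the strong-interaction regime $\alpha\le\lambda_H$ the stopped process $Z_{n\wedge T_H}$, with $T_H$ the first extinction time in $H$, is a non-negative supermartingale and so converges a.s.\ by Doob's theorem; on $\{T_H=\infty\}$ the increments $|Z_{n+1}-Z_n|$ are bounded below by $\min_{i\in H}(\bu_H)_i>0$, which rules out convergence, and therefore $T_H<\infty$ a.s. The weak-interaction regime $\alpha>\lambda_H$ is the genuine obstacle: $Z_n$ is now a divergent submartingale and naive Lyapunov arguments fail. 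My plan there is to bring in the remaining (generalised) eigenvectors of $\A_H^\T$ to construct additional local martingales as in the Athreya--Karlin--Kesten analysis of supercritical multi-type branching processes~\cite{Athreya1,Athreya2,Kesten1972,Svante2004}, and to exploit that transverse fluctuations of $\zeta_H$ off the Perron direction are of order $\sqrt{Z_n}$ so that coordinate ratios like $\zeta_i/\zeta_j$ are not asymptotically pinned down; thus from any starting state some coordinate reaches the value one with positive conditional probability, whence the boundary is hit with positive probability, and the Markov property upgrades this to almost sure extinction.

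Given the extinction lemma the theorem follows by iteration: apply the lemma to any source strongly connected subgraph of size $\ge 2$ of the currently-alive interaction graph, killing one of its vertices; remove the corresponding row and column of $\A$, and repeat. After at most $N$ iterations the alive graph is edgeless, and the remaining index set $\mathcal{I}$ satisfies the stated structural conditions. Positive probability of reaching any prescribed admissible $\mathcal{I}$ will be obtained by explicit construction: at each stage of the peeling, the transition probabilities~\eqref{embedded} give a lower-bounded conditional probability of killing the ``correct'' next vertex, so the event that the peeling follows any prescribed sequence of deaths has positive probability. Finally, on $\mathcal{I}$ the interaction terms in~\eqref{rates} vanish, so the restricted embedded chain is the classical generalised P\'olya urn on $K=|\mathcal{I}|$ colours and the restricted continuous-time process is a collection of independent Yule processes.
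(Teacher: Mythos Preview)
Your overall architecture---peel off source strongly connected components and, inside each strongly connected piece, show a.s.\ extinction of some coordinate---matches the paper's strategy, and your supermartingale argument in the regime $\alpha\le\lambda_H$ is exactly the paper's Case~1. The serious gap is your treatment of the weak-interaction regime $\alpha>\lambda_H$. Your heuristic (``transverse fluctuations off the Perron direction are of order $\sqrt{Z_n}$, so coordinate ratios are not pinned down'') points in the wrong direction: if the process grows linearly along the Perron direction with only $O(\sqrt{n})$ transverse fluctuations, then every coordinate stays of order $n$ and \emph{never} approaches zero---that would be an argument against extinction, not for it. What the paper actually shows is the opposite, that transverse fluctuations are \emph{too large}: taking $\bv_N$ a left eigenvector of $\A_H$ for an eigenvalue with $\Re(\lambda_N)<0$ (one exists since the trace of $\A_H$ vanishes), one computes for $U_n=\bv_N\cdot\zeta_H(n)$ that
\[
\E\bigl[\,|U_{n+1}|^2\,\big|\,\F_n\bigr]\ \ge\ |U_n|^2\Bigl(1+\tfrac{2(\alpha-\Re(\lambda_N))}{R_n}\Bigr)\qquad\text{on }\{\sigma>n\}.
\]
Coupled with the estimate $R_n\le(\alpha+o(1))n$ on $\{\sigma=\infty\}$, iterating gives $\E|U_m|^2\gtrsim m^{2(\alpha-\Re(\lambda_N))/\alpha}$ with exponent strictly greater than $2$; but trivially $|U_m|\le Cm$, and the contradiction forces $\P(\sigma=\infty)=0$. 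The mechanism is a second-moment blow-up driven by the \emph{negative} real part of $\lambda_N$, not a CLT-scale fluctuation; your sketch does not contain this idea, and without it the weak-interaction case is genuinely open.

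A secondary omission: when every source strongly connected component of the alive graph is a singleton $\{v\}$, your extinction lemma (which requires $|H|\ge 2$) does not apply and the peeling stalls. The paper handles this via a separate two-vertex comparison lemma: for any $w$ with $v\dir w$, couple $(X_v,X_w)$ with the process on the graph with the single edge $v\dir w$ and show directly that $X_w$ dies (essentially a triangular-urn argument). You need an analogue of that step before the induction can close.
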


\begin{example}
{\rm Suppose that all non-diagonal elements of $\A$ are strictly positive, i.e.\  graph $G(\A)$ is a complete graph (every pair of the process components interact with each other). Then, by Theorem~\ref{T1}, only one population will survive a.s.}
\end{example}

\begin{example}
{\rm Consider a directed graph $G$ with eight vertices $1,2,\ldots ,8$ depicted 
in Figure~\eqref{figure1}. It follows from Theorem~\ref{T1} that the set of 
limit configurations of surviving components is determined by the following 
subsets of vertices $(1, 3, i)$, or $ (1, 5, i)$, or $ (1, 3, 5, i)$, for $ i= 
6, 7, 8$. For instance, the subset $\{1, 3,5, 6\}$ can be obtained as follows. 
First, vertex $2$ is removed with all incoming and outgoing edges from the graph 
(i.e., component $X_2$ becomes extinct). Then, say vertices $7, 8$ and $4$ are 
subsequently removed. It is easy to see that the same surviving subset can be 
obtained in many ways. Note that the directed graph $G$ is not strongly 
connected, e.g. there is no path connecting vertex $2$ and vertex $1$. There are 
two strongly connected source subgraphs in this graph: a single vertex $\{1\}$ 
(source vertex), and the subgraph induced by vertices $\{6,7,8\}$. }
\end{example}

\begin{figure}[H]
 \centering
 \includegraphics[scale=0.9]{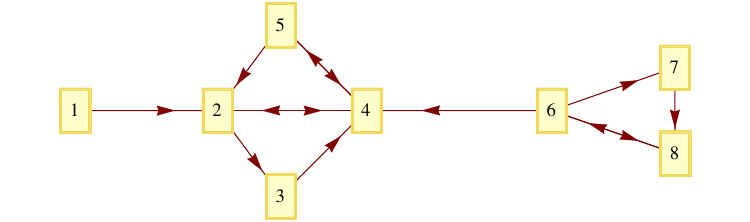}
 \vspace*{-4mm}
\caption{\small{Graph with 8 vertices }}
 \label{figure1}
\end{figure}

Finally, we describe a relevant urn model with $N$ different types of balls. For 
simplicity, assume that {\it both} $\alpha$ and {\it all} $a_{ij}$ are integers. 
Consider a DTMC $Y(n)=(Y_1(n),\ldots ,Y_N(n))\in \Z_{+}^N$,\, $n\in\Z_{+},$ 
where $Y_i(n)$ represents a number of balls of type $i=1,\ldots ,N$ in a urn. 
The dynamics of the model is as follows. Suppose an urn contains $Y_i\ge 1$ 
balls of type $i\in\{1,2,\dots,N\}$. Pick a ball of type $i$ with probability 
proportional to $Y_i$, and then return it to the urn with $\alpha$ additional 
balls of type $i$; at the same time for each $j\neq i$ {\em remove} $\tilde 
a_{ji}(n):=\min\{a_{ji},Y_j\}$ balls of type $j$. By doing so, we obtain a 
generalized P\'olya urn model with {\em removals}.

Formally, the transition probabilities of the urn are given by
\begin{equation}
\P\left(Y(n+1)=Y(n)+\alpha\be_i-\sum_{j=1}^N\tilde a_{ji}(n)\be_j\, \Big|\, Y(n)\right)=
\frac{Y_i(n)}{\sum_{j=1}^NY_j(n)},\quad i=1,\ldots ,N.
\end{equation}
Such a model with $\alpha=0$ and $A=\begin{pmatrix}0 &1\\1 &0 \end{pmatrix}$, called {\em the OK Corral model}, was considered in~\cite{Kingman}. Another similar model with removals, called {\em Simple Harmonic Urn}, was studied in~\cite{Crane}. The connection between the above urn model and the LCP is explained in Section~\ref{ProofT2}. Our results for the LCP extend to the urn model as follows.
\begin{theorem}
\label{T2}
The statement of Theorem~\ref{T1} for the DTMC $\zeta(n)=(\zeta_1(n),\ldots 
,\zeta_N(n))\in \Z_{+}^N,$ $n\in\Z_{+}$,
 holds also for the urn process $Y(n)=(Y_1(n),\ldots ,Y_N(n))\in \Z_{+}^N$.
\end{theorem}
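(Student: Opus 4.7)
The plan is to realise the urn process $Y(n)$ as the embedded jump chain of a CTMC closely related to the LCP, and then essentially repeat the arguments used for $\zeta(n)$ with minor modifications to account for the ``batch'' updates. Concretely, I would consider the CTMC $\tilde X(t)=(\tilde X_1(t),\ldots,\tilde X_N(t))\in\Z_{+}^N$ in which component $i$ fires at rate $\tilde X_i(t)$, and whenever $i$ fires, $\tilde X_i$ is incremented by $\alpha$ while simultaneously each $\tilde X_j$ with $j\neq i$ is decreased by $\min\{a_{ji},\tilde X_j(t^-)\}$. The total firing rate is $\sum_j \tilde X_j(t)$, so the probability that $i$ fires next equals $\tilde X_i/\sum_j \tilde X_j$, which matches the transition law of $Y$ exactly. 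Hence the embedded chain of $\tilde X$ equals $Y(n)$ in distribution, and any almost-sure statement about $Y$ follows from one about $\tilde X$.

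Next, I would verify that the eigenvector-based semimartingales driving the proof of Theorem~\ref{T1} transfer to $Y(n)$. If $u$ is a left eigenvector of $\A$ with eigenvalue $\lambda$ on a strongly connected induced subgraph, i.e.\ $\sum_j u_j a_{ji}=\lambda u_i$ on the relevant indices, then, in the ``bulk regime'' where no truncation of removals occurs,
\begin{equation*}
\E\left[u\cdot(Y(n+1)-Y(n))\,\big|\,\F_n\right]=\sum_{i}\frac{Y_i(n)}{\sum_j Y_j(n)}\Big(\alpha u_i-\sum_{j\neq i}u_j a_{ji}\Big)=(\alpha-\lambda)\,\frac{u\cdot Y(n)}{\sum_j Y_j(n)}.
\end{equation*}
This mirrors the structure used for $\zeta(n)$, differing only by the normalisation $\sum_j Y_j(n)$ instead of $R(\zeta(n))$, and can be turned into a supermartingale after the same logarithmic/normalised transformation as in Section~\ref{sec-proofs}. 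Applied to the Perron--Frobenius eigenvector of any non-trivial strongly connected subgraph with more than one vertex, this forces the extinction of at least one of its vertices, reproducing the key extinction lemma for $Y$.

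Once the extinction mechanism has reduced $Y$ to a non-interacting surviving subset $\mathcal{I}$, the truncation $\tilde a_{ji}(n)=\min\{a_{ji},Y_j(n)\}$ is immaterial for indices in $\mathcal{I}$ (because $a_{ji}=0$ there), and the restricted process evolves as a classical generalised P\'olya urn with replacement rule ``add $\alpha$ balls of the drawn type''. Standard urn theory then yields $Y_i(n)\to\infty$ for $i\in\mathcal{I}$ together with the Dirichlet-type limiting proportions. The characterisation of admissible $\mathcal{I}$ (non-interacting and containing at least one vertex from each strongly connected source subgraph) carries over verbatim from Theorem~\ref{T1}, as does the lower bound showing that each such $\mathcal{I}$ is realised with positive probability.

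The main obstacle is controlling the truncation effect $\tilde a_{ji}=\min\{a_{ji},Y_j\}$ in the semimartingale analysis before extinction has happened: the displayed drift identity is exact only in the bulk regime. However, truncation only diminishes the magnitude of removals, so for the non-negative Perron eigenvector $u\ge 0$ the true drift of $u\cdot Y$ is bounded above by the bulk expression; this is precisely the direction required for the extinction argument. An additional minor technicality is that non-simple or complex eigenvalues require the routine splitting into real and imaginary parts, exactly as in the treatment of $\zeta(n)$.
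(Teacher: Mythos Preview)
Your high-level plan---treat $Y(n)$ via the same eigenvector semimartingales as $\zeta(n)$---matches the paper's approach, but your handling of the truncation contains a sign error that invalidates the key step. You claim that, since truncation only diminishes removals, for the Perron vector $u\ge 0$ the true drift of $u\cdot Y$ is bounded \emph{above} by the bulk expression. The inequality goes the other way: when ball $i$ is drawn, the actual increment of $u\cdot Y$ is $\alpha u_i-\sum_j u_j\,\tilde a_{ji}\ge \alpha u_i-\sum_j u_j\,a_{ji}$, so the true drift is bounded \emph{below} by the bulk drift. In Case~1 of Lemma~\ref{sigma_finite} ($\lambda_1\ge\alpha$) the bulk drift is $\le 0$, but now the true drift may be strictly positive near the boundary, so $u\cdot Y(n)$ is no longer a supermartingale there and the convergence argument collapses. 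Separately, you invoke only the Perron eigenvector; the harder Case~2 ($\lambda_1<\alpha$) relies on $\bv_N$ with $\Re(\lambda_N)<0$ and a growth estimate for $|\bv_N\cdot\zeta|^2$, which is not a ``routine splitting into real and imaginary parts'' and for which no sign-based truncation monotonicity is available at all.

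The paper resolves this differently. It applies the semimartingale arguments of Lemmas~\ref{lem_N2} and~\ref{sigma_finite} \emph{only in the bulk}, where no truncation occurs, to conclude that $Y(n)$ almost surely enters the strip $D=\{\bx:\min_i x_i<B\}$ with $B=\max_{i,j}a_{ij}$. The genuinely new ingredient---absent from your proposal---is a direct Borel--Cantelli-type argument showing that each visit to $D\setminus D_0$ is followed by absorption in $D_0=\{\bx:\min_i x_i=0\}$ with probability at least $B^{-(B-1)}>0$, uniformly in the state. Without this bridging step from $D$ to $D_0$ (or a correct monotonicity substitute), the extinction lemma for $Y$ does not follow.
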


\section{Preliminaries}\label{sec-prelim}

\subsection{The model graph}\label{graph}

Recall Definitions~\ref{Dpath} and \ref{D4}.
\begin{lemma}
\label{lemgraphtheory}
Any non-empty directed graph $G=(V, E)$ contains a strongly connected source subgraph. 
\end{lemma}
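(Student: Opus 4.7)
The plan is to proceed via the standard condensation of $G$ into its strongly connected components. Define the relation $v \sim w$ on $V$ by setting $v \sim w$ if either $v = w$, or $v \ne w$ together with $v \dirl w$ and $w \dirl v$. This is an equivalence relation, partitioning $V$ into classes $C_1, \ldots, C_m$ (with $m \ge 1$ since $V$ is finite and non-empty). By Definition~\ref{Dpath}, each $C_k$ induces a strongly connected subgraph of $G$: a single vertex in the degenerate case, and a set of mutually reachable vertices otherwise.

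The next step is to form the condensation $\tilde G$ on vertex set $\{C_1, \ldots, C_m\}$, placing an edge $C_i \dir C_j$ whenever $i \ne j$ and some edge of $G$ runs from a vertex of $C_i$ to a vertex of $C_j$. The key structural observation is that $\tilde G$ is acyclic: a directed cycle $C_{i_1} \dir C_{i_2} \dir \cdots \dir C_{i_\ell} \dir C_{i_1}$ in $\tilde G$ would lift to mutual reachability in $G$ between vertices of all these classes, forcing them into a single equivalence class and contradicting $i_j \ne i_{j+1}$.

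Because $\tilde G$ is a finite non-empty DAG, it contains at least one source — a vertex with no incoming edges. Indeed, starting from an arbitrary class and repeatedly following an incoming edge cannot revisit any class (else a directed cycle in $\tilde G$ would arise), so the walk terminates at a source within finitely many steps. Let $C_*$ denote such a source of $\tilde G$. The induced subgraph of $G$ on $C_*$ is strongly connected by construction, and since $C_*$ has no incoming edges in $\tilde G$, no edge of $G$ enters $C_*$ from $V \setminus C_*$. Hence $C_*$ is a strongly connected source subgraph, as required.

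The only obstacle here is conceptual, namely verifying that the condensation is well-defined and acyclic; both are classical facts about directed graphs and require no calculation. The argument is purely graph-theoretic and does not invoke any structure specific to the linear competition process.
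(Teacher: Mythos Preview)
Your proof is correct and follows essentially the same route as the paper: both partition $V$ into strongly connected components, pass to the condensation $\tilde G$, observe that $\tilde G$ is a finite acyclic directed graph, and take any source vertex of $\tilde G$ as the desired component. The only cosmetic differences are in notation (the paper writes the equivalence classes as $V(v)$) and in the justification that a finite DAG has a source (the paper phrases this as ``a tree or forest has a root'' whereas you trace back along incoming edges), but the arguments are otherwise identical.
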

\begin{proof}[Proof of Lemma~\ref{lemgraphtheory}]
Let us introduce an equivalence relation $ \simeq$: we say that $v \simeq w$ if  $v\dirl w$ and  $w\dirl v$, with the convention that $v\simeq v$. It's trivial to check that the usual properties of reflexivity, symmetry, and transitivity are fulfilled. 

This equivalence relation provides a partition of the underlying set into disjoint equivalence classes; let us denote the class containing vertex $v$ by $V(v)$.
Therefore, we can partition the vertex set $V$ of the graph as follows
$$
V=V(v_1)\sqcup V(v_2)\sqcup\dots \sqcup V(v_l)
$$
for some vertices $v_1, v_2,\dots, v_l$.

Consider a directed graph $\tilde G$ with $l$ vertices $\tilde v_1,\ldots,\tilde v_l$, where $\tilde v_i\dir \tilde v_j$, whenever there are $v\in 
V(v_i)$ and $w\in V(v_j)$ such that $v\dir w$. The graph $\tilde G$ cannot have 
cycles. Indeed, if $\tilde v_1\dir \tilde v_2\dir \dots\dir\tilde v_m\dir\tilde 
v_1$ for some vertices $\tilde v_1, \tilde v_2,\dots, \tilde v_m$, then all 
vertices of $\bigcup_{k=1}^m V(v_k)$ must belong to the {\em same} $V(v)$ for 
some vertex $v$, leading to a contradiction.

Since $\tilde G$ does not have cycles, it is a tree or a forest (in case it is not connected). It is also finite, hence it must have at least one root, i.e.\ a vertex $\tilde v_i$ for which there is no $j$ such that $\tilde v_j\dir \tilde v_i$. Then the subgraph of $G$ induced by the set of vertices is $V(v_i)$ is indeed a strongly connected source subgraph.
\end{proof}

\begin{lemma}
\label{subgr}
Let $G'\subseteq G$ be a source subgraph of $G$ induced by a subset of vertices 
$V'=\{i_1,\ldots ,i_{N'}\}\subseteq\{1,\ldots ,N\}$, where $2\leq N'\leq N$. 
Let $X'(t)=(X_{i}(t),\, i\in {\cal I})\in\Z_{+}^{N'}$, $t\in\R_{+}$i.e. $X'(t)$ 
is a restriction of the LCP $X(t)$ on subgraph $G'$. Then the random process 
$X'(t)\in \Z_{+}^{N'}$ is a LCP with transition rates~\eqref{rates} specified by 
parameter $\alpha$ and interaction $N'\times N'$ matrix $\A'$ obtained from the 
interaction matrix $\A$ by crossing out $j$-th row and $j$-th column for all 
$j\notin {\cal I}$. 
\end{lemma}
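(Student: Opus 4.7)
The plan is to check directly from the transition rates~\eqref{rates} that the rates governing the marginal process $X'(t)$ depend only on the $V'$-components of $X(t)$ and coincide with the LCP rates associated with parameter $\alpha$ and the principal submatrix $\A'$. The only ingredient we need is a translation of the graph-theoretic source condition into a statement about the entries of $\A$. Since $v\dir v'$ by Definition~\ref{D2} means $a_{v'v}>0$, the hypothesis that there are no edges incoming to $G'$ from outside reads
\begin{equation*}
a_{i'v}=0\qquad \text{for every } i'\in V' \text{ and every } v\in V\setminus V'.
\end{equation*}

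With this identity in hand, I would argue as follows. For each $i'\in V'$, the only transitions in the original process that change $X_{i'}(t)$ are the $i'$-birth and $i'$-death transitions, and their rates are
\begin{equation*}
\alpha X_{i'}(t) \qquad\text{and}\qquad \left(\sum_{j=1}^{N} a_{i'j}X_j(t)\right)\mathbf{1}_{\{X_{i'}(t)>0\}}
=\left(\sum_{j\in V'} a_{i'j}X_j(t)\right)\mathbf{1}_{\{X_{i'}(t)>0\}},
\end{equation*}
where the equality uses the source condition to eliminate the terms with $j\notin V'$. Consequently, the collection of rates $(q_{\bx\by})$ restricted to those $\by$ that differ from $\bx$ only in a $V'$-coordinate depends on $\bx$ solely through $(x_i)_{i\in V'}$. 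Jumps in $V\setminus V'$ coordinates leave $X'(t)$ unchanged, so they do not contribute to the dynamics of $X'(t)$. By the standard construction of a CTMC as a pure-jump process driven by independent exponential clocks, it follows that $X'(t)$ is itself a CTMC whose rates match exactly those in~\eqref{rates} for the interaction matrix $\A'=(a_{ij})_{i,j\in V'}$ obtained by crossing out the rows and columns indexed by $V\setminus V'$.

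There is essentially no analytic obstacle here; the only step requiring any care is bookkeeping between the two conventions, namely the fact that $i\dir j$ corresponds to $a_{ji}>0$ (not $a_{ij}>0$), so that the source condition becomes a statement about the rows, rather than the columns, of $\A$ restricted to $V'$. Once this is noted, the proof reduces to reading off the rates.
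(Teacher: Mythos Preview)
Your proof is correct and follows essentially the same approach as the paper's: both argue that, by the source condition, the birth and death rates of components indexed by $V'$ depend only on the $V'$-coordinates, so the restriction is itself an LCP with interaction matrix $\A'$. Your version is simply more explicit, spelling out the translation $a_{i'v}=0$ for $i'\in V'$, $v\notin V'$ and the bookkeeping about the edge-direction convention, whereas the paper states the observation in one sentence and refers to the restriction principle.
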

Lemma~\ref{subgr} follows from the definition of the process $X(t)$ and the definition of a source subgraph, and is effectively a version of {\em the restriction principle} (see e.g.~\cite{DavisVolk1,DavisVolk2}). Indeed, it suffices to observe that the birth rate for any component $X_v$ depends only the component itself, and
the death rate for a component $X_v$, where $v\in V'$, is determined only by the process's components $X_u$,
$u\in V'$.

\begin{example}
{\rm Consider a LCP $X(t)\in \Z_{+}^8$ the corresponding graph of which is given in Figure~\ref{figure1}. Then a restricted process corresponding to the source subgraph induced by vertices $6,7$ and $8$, i.e.\ $X'(t)=(X_6(t), X_7(t), X_8(t))\in\Z_{+}^3$, is a LCP with transition rates~\eqref{rates} determined by the parameter $\alpha$ and an interaction matrix 
$$
\A'=\begin{pmatrix}0&a_{67}&a_{68} \\ a_{76} &0& a_{78}\\
a_{86} &a_{87}& 0\end{pmatrix},
$$
obtained from an interaction matrix $\A=(a_{ij})=(a_{ij})_{i,j=1}^8$ of the LCP $X(t)$. }
\end{example}

\subsection{The total transition rate}

Recall the total transition rate $R$ defined in~\eqref{R}.
Note that 
\begin{equation}
\label{Rn}
R_n:=R(\zeta(n))={\bf 1}\cdot(\alpha\I+\A)\zeta(n)
\quad\text{for}\quad \zeta(n)>0,
\end{equation}
where ${\bf 1}=\sum_{i=1}^N\be_i=(1,\dots,1)^\T\in\R^N$ and
 ${\bf I}$ is the $N\times N$ identity matrix.
It is easy to see that  
\begin{equation}
\label{Rn-bound}
R_n\leq \left(\alpha+\max_{j}\theta_j\right)|\zeta(n)|\leq \left(\alpha+\max_{j}\theta_j\right)
(|\zeta(0)|+n)
\quad\text{on}\quad \zeta(n)>0,
\end{equation}
where 
\begin{equation}
\label{theta}
\theta_j=\sum_{i=1}^Na_{ij}\quad\text{for}\quad j=1,\ldots ,N, 
\quad\text{and}\quad |\zeta(n)|=\sum\limits_{i=1}^N\zeta_i(n).
\end{equation}
Although simple upper bounds for $R_n$ in~\eqref{Rn-bound} suffice for our purposes,
we present  in Section~\ref{rate_behaviour}
 some additional  findings concerning the asymptotic behaviour of the total rate $R_n$, which can be of interest in their own right.

\subsection{The model semimartingales}

Our next observation is that the dynamics of the LCP $X(t)$ has a striking 
resemblance to that of continuous time multi-type branching process 
$Z(t)=(Z_1(t),\ldots , Z_N(t))\in\Z_{+}^N$, $t\in\R_{+}$ with $N$ types of 
particles, where~$Z_i(t)$ is the number of particles of type~$i$ at time $t$. 
The branching process evolves as follows: after an exponential time with mean 
$1$ a particle of type~$i$ splits to $1+\alpha$ particles of type $i$ and 
$a_{ji}$ particles of type $j$, all split times being independent.

Then, it is easy to see that, given $\bx=(x_1,\ldots ,x_N)\in\Z_{+}^N$, the 
expected change of the $i$-th population of the branching process is 
\begin{equation}
\label{BPdrift}
\E(Z_i(t+\Delta t)-Z_i(t)|Z(t)=\bx)=\left(\alpha x_i+\sum_{j=1}^Na_{ij}x_j\right)\Delta t+\bar o(\Delta t)
\end{equation}
and, similarly, the expected change of the $i$-th component of the LCP 
$X(t)=(X_1(t),\ldots ,X_N(t))\in\Z_{+}^N$,\, $t\in\R_{+}$ with transition 
rates~\eqref{rates} is
\begin{equation}
\label{drift}
\E(X_i(t+\Delta t)-X_i(t)\|X(t)=\bx)=\left(\alpha x_i-\sum_{j=1}^Na_{ij}x_j \right)\Delta t+\bar o\left(\Delta t\right)
\,\text{ on }\, \bx>0,
\end{equation}
where in both cases $\bar o\left(\Delta t\right)\to 0$ as $\Delta t\to 0$. Observe that the right hand side of equation~\eqref{drift} differs from that of equation~\eqref{drift} only by the sign in front of the sum of the interaction terms. Therefore, although the models are different as probabilistic models, they are quite similar to each other algebraically. 
It is well known that scalar products of a multi-type branching process with eigenvectors of the corresponding mean drift matrix play an important role in the study of those processes (\cite{Athreya2}). In light of the similarity between the linear competition process and the multi-type branching processes, it is not surprising that similar  quantities   
are useful in the study of the competition process.

A key observation is the following. Let $\bv$ be a {\em left} eigenvector  
corresponding to an eigenvalue~$\lambda$ of the matrix $\A$, that is 
$$
\bv^\T\A=\l\,\bv^\T.
$$
It follows from equations~\eqref{embedded} that 
\begin{equation}
\label{drift1}
\begin{split}
\E(\zeta_i(n+1)-\zeta_i(n)\|\F_n)&=\frac{\alpha\zeta_i(n)-\sum_{j=1}^Na_{ij}\zeta_j(n)}
{R_n}
=\frac{\left((\alpha\I-\A)\zeta(n)\right)_{i}}
{R_n}
\quad\text{on}\quad \{\zeta(n)>0\},
\end{split}
\end{equation}
and, hence,
\begin{equation}
\label{eqmart}
\begin{split}
\E(\bv\cdot\zeta(n+1)-\bv\cdot\zeta(n)\|\F_n)&=\frac{\bv\cdot (\alpha\I-\A) \zeta(n)}{R_n}
=\frac{(\alpha-\l)\,\bv\cdot\zeta(n)}{R_n}\quad\text{on}\quad\{\zeta(n)>0\}.
\end{split}
\end{equation}
 Thus, the 
 process $\bv\cdot\zeta(n\wedge\sigma)=(\bv\cdot\zeta(n\wedge\sigma),\, n\geq 0)$, where 
\begin{equation}
\label{sigma}
\sigma=\min\left(n\ge 0: \ \min_{i=1,\dots,N}\zeta_i(n)=0\right),
\end{equation}
can be sub- or super-martingale, depending on $\l$ and $\bv$.

In the rest of the section we use  the scalar products $\bv\cdot\zeta(n)$ as building blocks for 
constructing semi-martingales   useful for our proofs and collect some facts concerning the  behaviour 
of these processes.

\bigskip

Let $\lambda_i,\, i=1,\ldots,m$ be distinct eigenvalues 
of the interaction matrix $\A$.
By definition, the interaction matrix $\A$ is non-negative;
also, by our assumptions, $\A$ is irreducible.
Therefore,  by the Perron-Frobenius theorem its largest in absolute value eigenvalue is real, 
strictly positive and simple. Without loss of generality we denote this eigenvalue by $\l_1$.
Let~$\bv_1$
be a left eigenvector of the matrix~$\A$ corresponding to its largest in absolute value eigenvalue $\l_1>0$. 
Therefore, 
 by the same Perron-Frobenius theorem, we can choose the vector 
$\bv_1$ to be strictly positive, that is, $\bv_1\cdot \be_i>0$ for all $i$.

 Let
\begin{equation}
\label{Sn}
V_n:=\bv_1\cdot\zeta(n)=\sum_{i=1}^N (\bv_1\cdot \be_i)\zeta_i(n).
\end{equation}

\begin{remark}
\label{Vn>0}
{\rm 
Note that $V_n\geq 0$, 
 since $\bv_1>0$ and $\zeta_i(n)\geq 0$, $i=1,\ldots ,N$.}
\end{remark}

\begin{proposition}
\label{Vn-super}
Let $V_n$ be the process defined by equation~\eqref{Sn}.
If $\alpha\leq \lambda_1$, then 
$$
\E(V_{n+1}-V_n\|\F_n)\le 0\quad\text{on}\quad\{\sigma>n\},
$$
and if  $\alpha\geq \lambda_1$, then 
$$
\E(V_{n+1}-V_n\|\F_n)\geq 0\quad\text{on}\quad\{\sigma>n\}.
$$
In other words, if $\alpha\leq \lambda_1$, then the process $V_{n\wedge\sigma}$
is a non-negative supermartingale,
 and 
 if $\alpha\geq \lambda_1$, then the process $V_{n\wedge\sigma}$
is a non-negative submartingale.
\end{proposition}
\begin{proof}
The proposition  follows from Remark~\ref{Vn>0} and  equation~\eqref{eqmart}.
\end{proof}

Let $\bv$ be a  left eigenvector  corresponding to an eigenvalue $\l\neq \l_1$  of the  matrix of $\A$,  
and let 
\begin{align}
\label{Un}
U_n&=\bv\cdot\zeta(n).
\end{align}

\begin{remark}
\label{Complex}
{\rm  Note that  an eigenvalue $\l\neq \l_1$ of the matrix $\A$ may be 
complex, in which case~$U_n$ is complex as well.}
\end{remark}

\begin{remark}
\label{Un<C}
{\rm 
In what follows, we assume that the Euclidean norm 
 of any eigenvector 
 $\bv$ under consideration 
 is equal to one, i.e.
$||\bv||=1$. Under this assumption we have that 
$\max\limits_{i}|\bv\cdot \be_i|\leq 1$, and, hence, 
$$
|U_n|\leq |\zeta(n)|,
$$
for {\it any} eigenvector $\bv$ of interest. 
This implies that $|U_n|\leq n$,  
as, clearly, $|\zeta(n)|\leq n$.
In addition, note that in the special case of the process $V_n$ 
 defined in~\eqref{Sn} we have, by positivity of the eigenvector $\bv_1$,  the lower bound
$$V_n\geq \left(\min_{i}\bv_1\cdot \be_i\right) |\zeta(n)|,$$
where $\min\limits_{i}\bv_1\cdot \be_i>0$.
}
\end{remark}

\begin{proposition}
\label{Vn>Un}
Let $V_n$ and $U_n$ be the processes defined in~\eqref{Sn} and~\eqref{Un} respectively.
Then 
$\displaystyle{\frac{V_n}{|U_n|}\geq C}$
for some constant $C>0$.
\end{proposition}
\begin{proof}
The proposition follows from Remark~\ref{Un<C}.
\end{proof}

\begin{proposition}
\label{Un-subm}
Let $U_n$ be the process defined in~\eqref{Un}.
Then 
$$\E \left[|U_{n+1}| 
\| \F_n\right]\ge \left|1+\frac{\a-\Re(\l)}{R_n}\right|
|U_n|
\quad\text{on}\quad \{\sigma>n\}.$$
In particular, if $\alpha\geq \Re(\lambda)$, then 
\begin{equation}
\label{Un-subm1}
\E \left(|U_{n+1}|-|U_n|\| \F_n\right)\geq 
\frac{\a-\Re(\l)}{R_n}|U_n|
\ge 0\quad\text{on}\quad \{\sigma>n\},
\end{equation}
which implies that the process  $|U_{n\wedge \sigma}|$ is a non-negative 
submartingale.
\end{proposition}
\begin{proof}
By~\eqref{eqmart} we have   that 
\begin{align*}
\E(|U_{n+1}|\|\F_n)&\geq
 \left|\E(U_{n+1}\|\F_n)\right|
= \left|
\left(1+\frac{\a-\l}{R_n}\right)U_n
\right|
\ge 
\left|1+\frac{\a-\Re(\l)}{R_n}\right| |U_n|\quad\text{on}\quad\{\sigma>n\}.
\end{align*}
If $\alpha>\Re(\lambda)$, then 
$\left|1+\frac{\a-\Re(\l)}{R_n}\right|=1+\frac{\a-\Re(\l)}{R_n}$, and 
we get equation~\eqref{Un-subm1}, as claimed.
\end{proof}

To proceed further, denote for short 
$$e:=e(n)=\zeta(n+1)-\zeta(n) \in\{\pm \be_1, \ldots, \pm \be_N\},$$
i.e.  $e$ is the ``increment'' of the configuration. It follows from Remark~\ref{Un<C}, 
that
\begin{equation}
\label{v-e}
|\bv\cdot e|^2\leq 1
\end{equation}
for any eigenvector $\bv$ under consideration.
In addition, we will use the elementary 
inequality\footnote{indeed, we have
$1+x-y+x^2+3y^2-{\frac {1+x}{1+y}}
= \big( x+{\frac {y}{2+2
y}}\big)^2+{\frac{y^2\left(6y+7 \right)\left(2y+1
 \right) }{4\left( 1+y \right)^2}}$, 
 which readily implies~\eqref{1+x/1+y}}
\begin{equation}
\label{1+x/1+y}
 \frac{1+x}{1+y} \leq 1+x-y+x^2+3y^2
  \quad \text{ for }
  |y|\leq \frac{1}{2}
  \text{ and all }x\in \R.
\end{equation}

\begin{proposition}
\label{Wn-supermartingale}
Let 
$V_n$ be the process defined in~\eqref{Sn}, and let 
 $U_n$ be the process defined in~\eqref{Un},
where the eigenvalue $\l\neq \l_1$.
Define 
\begin{equation}
\label{Wn}
W_n=\frac{V_n}{|U_n|^2}.
\end{equation}
If $\alpha>\l_1$ (and, hence, $\alpha>\Re(\l)$), 
then there exist constants $C_1>0$ and $\gamma>0$
such that 
$$
\E(W_{n+1} -W_n \| \F_n)\le  
0\quad\text{on}\quad \{\sigma>n, \, W_n \le C_1,\, |\zeta(n)|>\gamma\},
$$ 
i.e., the process $W_{n\wedge \sigma}$ is a non-negative supermartingale on 
$\{W_n \le C_1, |\zeta(n)|>\gamma\}$.
\end{proposition}

\begin{proof}
Observe that 
\begin{equation}
\label{Wn+1}
\begin{split}
W_{n+1}&=\frac{V_n+\bv_1\cdot  e}{|U_n +\bv \cdot e|^2}
=\frac{V_n+\bv_1\cdot e}{|U_n |^2+2\Re(U_n 
\overline{\bv}\cdot e)+1}\\
&=W_n \left(1+\frac{\bv_1\cdot e}{V_n}\right)
\left(1+\frac{2\Re(U_n (\overline{\bv}\cdot e))+1}{|U_n|^2}\right)^{-1}\\
\end{split}
\end{equation}
We can assume that both $V_n$ and $|U_n|$ are sufficiently large.
Indeed, by Remark~\ref{Un<C} we have that 
$V_n$ is large provided that $|\zeta(n)|$ is large, since they are 
of the same order. 
Then, the  assumption $\{W_n\le C_1\}=
\{V_n\le C_1|U_n|^2\}$ implies that $|U_n|$ is also large.
Therefore, using~\eqref{v-e} and~\eqref{1+x/1+y} we obtain that 
\begin{equation}
\label{Wn+1-2}
W_{n+1} \leq W_n(1+J_{n,1}+J_{n,2}+J_{n,3}+J_{n,4}),
\end{equation}
where 
\begin{align*}
J_{n,1}&=\frac{\bv_1\cdot e}{V_n},\quad
J_{n,2}=-\frac{2\Re(U_n (\overline{\bv}\cdot e))}{|U_n |^2},\\
J_{n,3}&=\frac{1}{V_n^2}\quad\text{and}\quad
J_{n,4}= 3\frac{(2\Re(U_n)+1)^2}{|U_n |^4}.
\end{align*}
Thus, 
\begin{equation}
\label{Exp}
\E(W_{n+1}-W_n|\F_n) \leq W_n\E(J_{n,1}|\F_n)+W_n\E(J_{n,2}|\F_n)
+W_n\E(J_{n,3}|\F_n)+W_n\E(J_{n,4}|\F_n).
\end{equation}
Using equation~\eqref{eqmart}, we obtain  that 
\begin{align}
\label{j1}
W_n\E(J_{n,1}|\F_n)&=\frac{W_n}{V_n} \frac{(\alpha-\lambda_1)V_n}{R_n}=
(\alpha-\lambda_1)\frac{W_n}{R_n},\\
W_n\E(J_{n,2}|\F_n)&=-2\frac{W_n U_n }{|U_n |^2}
\frac{(\alpha-\Re(\lambda))\overline{U}_{n}}{R_n}=
-2(\alpha-\Re(\lambda))\frac{W_n}{R_n}.
\label{j2}
\end{align}
Further,  since both $V_n$ and $|U_n|$ 
are sufficiently large, 
we have that    
$$W_n\E(J_{n,3}|\F_n)=\frac{1}{V_n^2}\frac{W_n}{R_n}\leq \eps_1\frac{W_n}{R_n},$$
and 
$$W_n\E(J_{n,4}|\F_n)=3
\frac{(2\Re(U_n)+1)^2}{|U_n |^4}\frac{W_n}{R_n}\leq \eps_2\frac{W_n}{R_n},$$
for  a sufficiently small $\eps_1>0$.
Therefore, since $\alpha>\l_1>\Re(\l)$, we get that 
$$
\E(W_{n+1} -W_n \| \F_n)\le (-\alpha-\lambda_1+2\Re(\lambda)+2\eps_1)\frac{W_n}{R_n} 
\le 0,$$
as claimed.
\end{proof}

\begin{proposition}
\label{lemSPA3}
Let $V_n$  and $U_n$ be the processes from Proposition~\ref{Wn-supermartingale}
and define
\begin{equation}
\label{Hn}
H_n=\frac{V_n}{|U_n|}.
\end{equation}
There exist constants $C_3,C_4>0$ and $\gamma>0$ such that 
$$
\E(H_{n+1}-H_n\| \F_n)\le -\frac{C_4}{R_n}\leq 0\quad\text{on}\quad
\left\{ \sigma>n, \,|V_n|\leq C_3|U_n|^2,\, |\zeta(n)|>\gamma\right\}.
$$
\end{proposition}
\begin{proof}
Write (by~\eqref{1+x/1+y})
\begin{align*}
H_{n+1} &=
 \frac{V_n+\bv_1\cdot e}{|U_n|+|U_{n+1}|-|U_n|}= H_n
 \frac{1+\frac{\bv_1\cdot e}{V_n}}
 {1+ \frac{|U_{n+1}|-|U_n|}{|U_n|}}.
\end{align*}
Using~\eqref{v-e} and~\eqref{1+x/1+y}, we obtain that
\begin{align*}
H_{n+1} &\leq H_n\Big(1 + \frac{\bv_1\cdot e}{V_n}
 - \frac{|U_{n+1}|-|U_n|}{|U_n|}
 +\frac{(\bv_1\cdot e)^2}{V_n^2}
 +3\frac{(|U_{n+1}|-|U_n|)^2}{|U_n|^2} \Big)\\
  & \leq H_n\Big(1 + \frac{\bv_1\cdot e}{V_n}
 - \frac{|U_{n+1}|-|U_n|}{|U_n|}
 +\frac{1}{V_n^2}
 +\frac{3}{|U_n|^2} \Big) .
\end{align*}
So, we now find, with~\eqref{eqmart} and Corollary~\ref{Un-subm},  that 
\begin{align*}
  \E(H_{n+1}-H_n\mid \F_n)
& \leq H_n\left(\frac{\alpha-\lambda_1}{R_n} - 
\frac{\alpha-\Re(\lambda)}{R_n}
+\frac{1}{V_n^2} + \frac{3}{|U_n|^2}\right)\\
& = \frac{H_n}{R_n}\left(-(\lambda_1-\Re(\lambda))
+\frac{R_n}{V^2_n} + \frac{3R_n}{|U_n|^2}\right).
\end{align*}
Next, using bound~\eqref{Rn-bound}, Remark~\ref{Vn>0}  and the assumption that 
$V_n\leq C_3|U_n|^2$,  we obtain that
$$
\frac{R_n}{V^2_n}\leq \eps_1\quad\text{and}\quad
\frac{3R_n}{|U_n|^2}\leq \frac{3C_3R_n}{V_n}\leq \eps_1,
$$
where $\eps_1>0$ is sufficiently small (provided that $|\zeta(n)|$ is sufficiently large and $C_3$ 
is sufficiently small), so that 
$-(\lambda_1-\Re(\lambda))+2\eps_1<0$.
By Proposition~\ref{Vn>Un} we have that 
$H_n$ is bounded below by a positive constant. Therefore, we finally obtain that 
$$
\E(H_{n+1}-H_n\mid \F_n)\leq -\frac{C_4}{R_n},$$
for some constant $C_4>0$,  as  claimed.
\end{proof}

\section{Proofs of theorems}\label{sec-proofs}
\subsection{Proof of Theorem~\ref{T1}}
The proof of Theorem~\ref{T1} is based on Lemmas~\ref{lemgraphtheory} and~\ref{subgr} (see Section~\ref{graph}) and Lemmas~\ref{lem_N2} and~\ref{sigma_finite} stated below and proved later in 
Section~\ref{sec-lemmas_proofs}.

\begin{lemma}\label{lem_N2}
Let $N=2$ and $\A=\begin{pmatrix}0 &0 \\ \b &0\end{pmatrix}$, where $\beta>0$ is a given constant. In other words, the model graph $G$ contains just one edge $1\dir 2$. Suppose that $\zeta_1(0)>0$ (equivalently, $X_1(0)>0$). Then, with probability one, the DTMC (equivalently, CTMC) dies out on vertex $2$, i.e.\ there is a (random) time $n'\ge 0$ such that $\zeta_2(n)=0$ for all $n\ge n'$.
\end{lemma}

\begin{lemma}
\label{sigma_finite}
Let the interaction matrix $\A$ be irreducible, or, equivalently, the corresponding directed graph $G$ (defined in Definition~\ref{D2}) be a strongly connected directed graph. Recall the stopping time $\sigma$ defined in~\eqref{sigma} and define similarly 
\begin{equation*}
\ts=\min\left(t\ge 0: \ \min_{i=1,\dots,N}X_i(t)=0\right).
\end{equation*}
Then for any initial condition $X(0)=\zeta(0)$, 
it holds that
$\P(\sigma<\infty)=\P(\ts<\infty)=1.$ 
\end{lemma}

The proof of the theorem is by induction on $N$. If $N=1$, then the statement is trivial. Assume that $N\ge 2$. By Lemma~\ref{lemgraphtheory} there is a strongly connected source subgraph $G'=(V', E')$ with $N'$ vertices. Now there are two possibilities.

\begin{itemize}
\item[(a)] If $N'\ge 2$, then we can apply Lemma~\ref{sigma_finite}. After one of the components $X_v,\, v\in V'$, becomes~$0$, say, it is $X_{v'}$, we remove the vertex $v'$ from $V$, and, hence, remove the corresponding column and row of the matrix $\A$. New graph contains $N-1$ vertex for which the statement of the theorem holds by induction. \item[(b)] If $N'=1$, then $G'$ consists of just one source vertex, say, $v$. Since, by definition, there are no edges incoming to $v$, the death rate at $v$ is zero. Therefore, the component $X_v(t)$ will survive forever.

Further, there are two sub-cases to consider. First, if $v$ is an isolated vertex of $G$ (i.e. there are no edges coming into or going out of $v$), then we can apply the induction to the subgraph induced by the vertex set $V\setminus\{v\}$.

If the vertex $v$ is not isolated, then consider a vertex $w$ for which $v\dir w$. Since the birth rate at $v$ depends on $X_v$ only, and the death rate at $w$ results from the weighted sum of $X_{u}$ for all $u$ such that $u\dir w$, one can couple $(X_v(t),X_w(t))$ with CTMC $(\tilde X_v(t),\tilde X_w(t))$ on the graph $\tilde G$ with two vertices $\{v,w\}$ and the only edge $v\dir w$ in such a way that 
$$
X_v(t)=\tilde X_v(t),\quad X_w(t)\le \tilde X_w(t),
$$ 
similarly to Lemma~\ref{subgr}. The above inequality arises from the fact that there may be some vertex $u$ such that $u\dir w$. By Lemma~\ref{lem_N2} the LCP on $\tilde G$ dies out on $w$, and, hence, the same happens on $G$. Therefore, we can remove the vertex $v$ and all vertices $w$ such that $v\dir w$ from the graph. The resulting graph will have $\le N-2$ vertices, so we can apply the induction again by introducing a sequence of stopping times, each time identifying a random smaller subgraph, and using  the fact that we are allowed arbitrary initial conditions in Lemmas~\ref{lem_N2} and~\ref{sigma_finite}.
\end{itemize}

\begin{remark}
{\rm It is crucial that one chooses the strongly connected {\bf source} subgraph. Indeed, consider the graph in Figure~\eqref{figure1}, and assume that $X_i(0)=0$ for $i=5,6,7,8$. One might think that the subgraph with vertices $\{2,3,4\}$ is admissible. Indeed, at a first glance, it looks like the chances that $X_2$ dies out only ``improve'' due to the presence of the link $1\dir 2$. However, this becomes not so apparent, if one considers the fact that the lower value at $\{2\}$ results in higher values at $\{3\}$, which, in turn, leads to a lower value at $\{4\}$, and as a result, a smaller death rate at $\{2\}$.}
\end{remark}

\subsection{Proof of Theorem~\ref{T2}}\label{ProofT2}
As before, we assume that all $a_{ij}$, $\a$ and $Y_i(0)$'s are integers. This implies that all $Y_i(n)$'s are integers for all $n\geq 1$. The proof can be easily adapted to the case when it is not true, but we do not want to complicate it unnecessary.

We start with explaining the connection between the urn model and the linear competition process from Theorem~\ref{T1}. Note that, as long as the process is {\it sufficiently far away} from the boundary, (i.e. all $Y_i'$s are sufficiently large) we have that 
$$
\E\big(Y_i(n+1)-Y_i(n)\|Y(n)=(Y_1,\ldots , Y_N)\big)
=\frac{\alpha Y_i-\sum_{j=1}^N 
a_{ij} Y_j}{\sum_{j=1}^N Y_j},\quad i=1,\ldots ,N,
$$
where the numerator looks the same as the numerator on the right hand side of equation~\eqref{drift1}, giving the mean jump of a component of the DTMC $\zeta(n)$ when $\zeta(n)>0$. Therefore, the proof can be carried out similarly to that of Theorem~\ref{T1}, with a little bit more work. A new argument required is given below.

Assume that either $N=2$ and matrix $\A=\begin{pmatrix}0 &0 \\ \b &0\end{pmatrix}$, where $\beta>0$ (i.e.
as in Lemma~\ref{lem_N2}), or the matrix $\A$ is irreducible. Let
\begin{equation}
\label{lambda}
B=\max_{i,j} a_{ij},
\end{equation}
and define the following sets 
\begin{align*}
D&=\{\bx\in \Z_+^N:\ \text{at least one }x_i<B\},\\
D_0&=\{\bx\in \Z_+^N:\ \text{at least one }x_i=0\}.
\end{align*}
Similarly to Lemma~\ref{lem_N2} and Lemma~\ref{sigma_finite}, one can show that, with probability one, the DTMC $Y(n)$ enters set $D$. Hence, if $Y(n)$ leaves $D$ without hitting $D_0$, it will have to re-enter $D$ again. Let us show that it is impossible to enter and leave $D$ infinitely many times without hitting $D_0$, i.e.\ 
$$
\{Y(n)\in D\text{ infinitely often}\}\subseteq \{Y(m)\in D_0\text{ for some }m\}.
$$
Assume that $a_{12}>0$, and, hence $a_{12}\ge 1$ ($a_{ij}$ are integers by the earlier assumption). Note that if $a_{1k}$ were $0$ for all $k$ then $Y_1(n)$ would never decrease. Now, at time $m>n$, as long as $Y(m)\in D\setminus D_0$, the conditional probability that ball of type $2$ is chosen given a ball of either type $1$ or $2$ is chosen, is at least $\frac{1}{1+(B -1)}=\frac{1}{B}$, provided $Y_2(m)\ge 1$ (otherwise $Y_2(m)=0$, i.e.\ the process has already reached the boundary $D_0$). On the other hand, on this event 
$$
Y_1(m+1)=\max(Y_1(m)-a_{12},0)\le Y_1(m)-1.
$$
Hence, if this conditional event happens consecutively (at most) $B-1$ times, $Y_1$ will become zero. Since all types of balls (as long as they are present in the urn) are chosen infinitely often (the process does not explode), we obtain that
$$
\P(\exists m\ge n:\ Y(m)\in D_0\text{ and } Y(k)\in D\text{ for all }k\in[n,m]\| Y(n)\in D)\ge 
B^{-(B -1)}>0.
$$
The claim now follows.

The rest of the proof of Theorem~\ref{T2} is identical to that of Theorem~\ref{T1}.

\section{Proofs of lemmas}\label{sec-lemmas_proofs}
\subsection{Proof of Lemma~\ref{lem_N2}}
We start with describing the intuition behind the proof. The behaviour of the LCP should be similar to that of the dynamical system, governed by the system of differential equations
$$
\begin{cases}
\dot{x}=x,\\
\dot{y}=y-\beta x.
\end{cases}
$$
The solution to this system is $x=C_1 e^t$, $y=(C_2-\beta C_1 t)e^t$. It is clear that there are no constants~$C_1$ and~$C_2$ for which both $x(t)$ and $y(t)$ would remain positive for all $t>0$.

The formal proof is as follows. First, assume w.l.o.g.\ that $\alpha=1$, and note that the DTMC $\zeta(n)=(\zeta_1(n),\zeta_2(n))$ can be coupled with the classical P\'olya urn with two types of balls such that
$$
 \zeta_1(n)\ge \tilde\zeta_1(n),\quad \zeta_2(n)\le \tilde\zeta_2(n),
$$
where $\tilde\zeta_1(n)$ and $\tilde\zeta_2(n)$ denote the number of balls of type 1 and 2 respectively (if $\beta=0$ then our model will be exactly the P\'olya urn). The well-known result (see e.g.~\cite{Blackwell}) says that, with probability one, $\tilde\zeta_2(n)/\tilde\zeta_1(n)\to \xi/(1-\xi)$, where $\xi$ is a Beta-distributed random variable, hence
\begin{align} \label{eqq-0}
\limsup_{n\to\infty} \frac{\zeta_2(n)}{\zeta_1(n)}<\infty\quad\text{a.s.}
\end{align}

Now we will use the following martingale argument. Let $S_n=\frac{\zeta_2(n)}{\zeta_1(n)+\zeta_2(n)}$.  Then 
\begin{align}\label{eqq-1}\nonumber
\E(S_{n+1}-S_n\| \zeta(n)=(x,y))&=\frac{
x \left[\frac{y}{x+y+1}-\frac{y}{x+y}\right]
+
y\left[\frac{y+1}{x+y+1}-\frac{y}{x+y}\right]
+
\b x\left[\frac{y-1}{x+y-1}-\frac{y}{x+y}\right]
}{(1+\b)x+y}
\\
&=\frac{-\b x^2}{(x+y)(x+y-1)(x+y+\b x)}\quad \quad\text{when }x,y\ge 1,
\end{align}
and
\begin{align}\label{eqq-1b}
\E(S_{n+1}-S_n\| \zeta(n)=(x,0))&=0 \quad\text{when }x\ge 1.
\end{align}
Suppose that $1\le y\le rx$ for some positive $r>0$. Then the conditional expectation in~\eqref{eqq-1} is bounded  above by
\begin{align}\label{eqq-2}
\frac{-\b x^2}{(x+rx)(x+rx-1)(x+rx+\b x)}
\leq
\frac{-\b x^2}{(x+rx)^2(x+rx+\b x)}
= -\frac {C_r}{x}\ ,
\end{align}
where $C_r=\frac{\b}{(1+r)^2(1+r+\b)}>0$. 

Let 
\begin{align*}
\tau_*&=\inf\{n\ge 0: \ \zeta_2(n)=0\},\\
\tau_r&=\inf\{n\ge 0: \ \zeta_2(n)>r\zeta_1(n)\}.
\end{align*}
From~\eqref{eqq-1}, \eqref{eqq-1b}, \eqref{eqq-2}, and the fact that $\zeta_1(n)\le n+\zeta_1(0)$, we obtain
\begin{align}\label{eqq-3}
\E(S_{n+1}-S_n\| \F_n)&\le 
-\frac{C_r\, 1_{\{\tau_*,\tau_r>n\}}}{\zeta_1(n)}
\le  -\frac{C_r \, 1_{\{\tau_*,\tau_r>n}\}}{\zeta_1(0)+n}
\end{align}
By taking the expectation and summing up \eqref{eqq-3} for all $n\ge 0$, and noting that $\E(S_{n+1}-S_n\| \F_n)\le 0$, so that $S_n$ is a non-negative supermartingale,  we obtain
$$
0\le \lim_{n\to\infty} \E(S_n)\le \E(S_0)-\sum_{n=0}^{\infty} \frac {C_r\P(\min(\tau_*,\tau_r)>n)}{n+\zeta_1(0)}
$$
for all $r>0$. Since the harmonic series diverges, we have $\P(\min(\tau_*,\tau_r)>n)\downarrow 0$, that is, $\min(\tau_*,\tau_r)<\infty$ a.s. On the other hand, it follows from~\eqref{eqq-0}  that a.s.\ there is an $r>0$ such that $\tau_r=\infty$. 
Consequently, $\tau_*<\infty$ a.s., that is, eventually $\zeta_2(n)$ will become $0$.
\begin{remark}
{\rm Note that the dynamics described in Lemma~\ref{lem_N2} is similar to that of a triangular urn, studied in~\cite[Theorem 2.3]{PV99}.}
\end{remark}

\subsection{Proof of Lemma~\ref{sigma_finite}}
We are going to show that, with probability one, the stopping time $\sigma$ (defined in~\eqref{sigma})
 is finite. 
This will imply that $\ts$ is also finite almost surely, since the LCP $X(t)$ is a non-explosive CTMC. 
Below, we  consider two cases: $\alpha\leq \l_1$ and $\alpha>\l_1$.

\subsubsection{Case: $\alpha\leq \l_1$}
Recall the process $V_n$ defined in~\eqref{Sn}.
By Proposition~\ref{Vn-super}, if $\alpha\leq \l_1$, then the process $V_{n\wedge\sigma}$ 
is a non-negative supermartingale, and, hence, it must converge a.s. Note that 
if $\sigma>n$, then at least one of the following events 
$\{\zeta_i(n+1)-\zeta_i(n)=\pm 1\}$, $i=1,\ldots ,N$, must occur, and, hence, 
$V_n=\bv_1\cdot\zeta(n)$ will change at least by $\eps=\min_{j=1,\dots,N} 
\bv_1\cdot\be_j>0$. Therefore, convergence of $V_{n\wedge\sigma}$ is possible if 
and only if the stopping time $\sigma$ is finite.

\begin{remark}
\label{linear-time}
{\rm In addition, note that if $\alpha<\l_1$ then, by equation~\eqref{eqmart}
 we have that 
\begin{equation}
\label{rho_drift}
\E(V_{n+1}-V_n\|\F_n)=-(\l_1-\alpha) \frac{\bv_1\cdot \zeta(n)}{{\bf 1}\cdot(\alpha\I+\A)\zeta(n)}\le 
-(\l_1-\alpha)\rho<0\quad\text{on}\quad\{\sigma>n\},
\end{equation}
where
$$
\rho=\inf_{\bx>0}
\frac{\bv_1\cdot\bx}{{\bf 1}\cdot(\alpha\I+\A)\bx}=\min_{\bx>0}\frac{\bv_1\cdot\bx}{{\bf 1}\cdot(\alpha\I+\A)\bx}
=\min_{j=1,\dots,N} \frac{\bv_1 \cdot \be_j}{\a+\theta_j}>0,
$$
and $\theta_j$ is defined in~\eqref{theta}, 
since the right-most numerator is bounded below by $\eps>0$. 
In turn, equation~\eqref{rho_drift} implies
(by Theorem 2.6.2 in \cite{MPW}) that 
\begin{equation}
\E(\sigma)\leq\frac{V_0}{(\l_1-\alpha)\rho}=\frac{\bv_1\cdot\zeta(0)}{(\l_1-\alpha)\rho}\quad\text{for}\quad \zeta(0)>0.
\end{equation} 
In other words, if $\l_1>\alpha$, then the waiting time until extinction is linear in the initial position of the process. }
\end{remark}

\subsubsection{Case: $\alpha>\l_1$}
We start with briefly outlining the plan of the proof in this case.
Given $\psi>0$ define 
\[
D_{\psi}=
\Big\{\zeta=(\zeta_1,\ldots,\zeta_N)>0: \min_{j=1,2,\dots,N}\frac{ \zeta_j}{|\zeta|}\le \psi\Big\}
\subset\Z_{+}^N.
\]
For suitable constants $\gamma,\psi$, we will define 
the moment~$\sigma'(m)$ when the process steps on the ``bad set''
$D_\psi\cup \{\zeta: |\zeta|\leq \gamma\}$ after time~$m$:
\begin{equation}
\label{df_sigma'}
 \sigma'(m) = \min\big\{n\geq m: \zeta(n) \in D_\psi\text{ or } |\zeta|\leq \gamma\big\}
\end{equation}
(see Figure~\ref{f_bad_set}); note also that $\sigma'(0)\leq \sigma$.
\begin{figure}
\centering
 \includegraphics{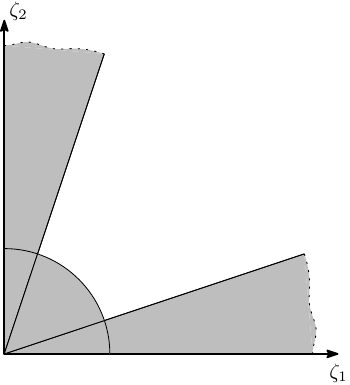}
 \caption{\small{An illustration of the ``bad set'' (colored gray)
 in the case $N=2$.}}
\label{f_bad_set}
\end{figure}
We will then show that if we are able to prove that~$\sigma'(m)$ 
is a.s.\ finite for any ``initial'' (at time~$m$) configuration of the 
process, then this would imply that~$\sigma$ is a.s.\ finite.
The advantage of working with $\sigma'$ instead of~$\sigma$
is that, as long as the process is not in the ``bad set'',
the configuration is ``balanced'' (i.e., all its components are of the same order)
and ``not too small'', which makes several Lyapunov-finctions-related
computations (needed to prove the finiteness of~$\sigma'$) more smooth.

\paragraph{Step 1: $\sigma'(\cdot)<\infty$ implies  $\sigma<\infty$.}
\begin{lemma}
\label{l_kill_the_weak}
There are positive constants $\psi, \gamma_2, \gamma_3$ (depending on the model's parameters) such that
\begin{equation}
\label{eq_kill_the_weak}
\P_\zeta(\sigma>\gamma_2 |\zeta|)\leq e^{-\gamma_3 |\zeta|}
\quad\text{for}\quad \zeta\in D_{\psi},
\end{equation}
where $\P_\zeta$ denotes  the distribution of the discrete-time process started at 
$\zeta=(\zeta_1,\ldots,\zeta_N)$.
\end{lemma}
In other words,  if the process finds itself in a ``very unbalanced'' state (i.e., the relative difference between some of its coordinates is very large), then the moment~$\sigma$ should happen \emph{relatively} soon, in time of order of the size of that state.

\begin{proof}
Let us observe that, when the process is currently in a configuration $\zeta>0$, the (discrete-time) dynamics described in~\eqref{embedded} can be reformulated in the following way:
\begin{itemize}
 \item[(i)] Choose a coordinate $j\in\{1,\ldots,N\}$ 
 with probability 
 $$\frac{(\alpha+\theta_j)\zeta_j}{\sum_{k=1}^N(\alpha+ \theta_k)\zeta_k}
 =\frac{(\alpha+\theta_j)\zeta_j}{R(\zeta)},$$
 where $\theta_j$ is defined in~\eqref{theta};
 we will say that $j$ was ``chosen to act''.
 \item[(ii)] The chosen coordinate will then ``increase itself''
 by one unit (i.e., $\zeta$ goes to $\zeta+ \be_j$)
 with probability $\frac{\alpha}{\alpha+\theta_j}$,
 and it will ``do a kill at~$i$'' (i.e., $\zeta$ goes to $\zeta-\be_i$)
 with probability $\frac{a_{ij}}{\alpha+\theta_j}$.
\end{itemize}
Let $\mu\geq 1$ be such that $\mu^{-1}\leq \alpha+\theta_j\leq\mu$
for all $j=1,\ldots,N$.
Also, let 
\[
\delta=
\min_{i,j:a_{ij}>0} \frac{a_{ij}}{\alpha+\theta_{j}} 
\in (0,1)
\]
be the minimum ``individual conditional killing probability'' (recall the second part in~(ii) above) among those that are positive. Let us also say that an event holds with very 
high probability (w.v.h.p.) if its $\P_\zeta$-probability is at least  $1-e^{-\gamma' |\zeta|}$ for some constant~$\gamma'>0$ (so that we need to prove that the moment~$\sigma$ will happen in linear in~$|\zeta|$ time w.v.h.p.). Let us first prove the following fact: 
\begin{equation}
\label{kill_edge}
 \text{if } a_{ji}>0,\ \frac{\zeta_i}{|\zeta|}\geq h\in (0,1),\  
  \frac{\zeta_j}{|\zeta|}\leq \frac{\delta^2 h^2}{16\mu^{6}}, 
  \text{ then }
  \sigma\leq \frac{\delta h}{3\mu^{4}}|\zeta| \; \text{ w.v.h.p.}
\end{equation}
To see the above, let us think about what will happen at~$i$ and~$j$ during the 
time~$t|\zeta|$ (later we will suitably choose~$t$ to be equal 
to~$\frac{\delta h}{3\mu^{4}}$,
but assume for now that $t<h$). 
First, note that
\[
 \mu^{-2} \frac{\zeta_k}{|\zeta|}
 \leq \P_\zeta(\text{$k$th coordinate is chosen to act})
\leq \mu^{2} \frac{\zeta_k}{|\zeta|}.
\]
Then, during the first~$t|\zeta|$ time units, 
the proportion~$\frac{\zeta_i(\cdot)}{|\zeta(\cdot)|}$ at the $i$th coordinate cannot become less than $h-t$ (think of the ``worst case'' when the $i$th coordinate always decreases by~$1$ per time unit), and therefore the probability that the $i$th coordinate is chosen to act cannot become less than $\mu^{-2}(h-t)$, so the (conditional) probability that it causes the $j$th coordinate to decrease on a given step (among the first~$t|\zeta|$) is at least $\delta\mu^{-2}(h-t)$. Therefore, w.v.h.p., the ``killing count'' (during the first~$t|\zeta|$ time units) of~$i$ at~$j$ will be at least $(\delta\mu^{-2}(h-t)-\eps)t|\zeta|$ (where~$\eps$ is fixed-and-small). 

On the other hand, the proportion  at the $j$th coordinate cannot exceed $\frac{\zeta_j}{|\zeta|}+t$(analogously, think of the ``best case'' when the $j$th coordinate always increases by~$1$ per time unit), and therefore the probability
that the $j$th coordinate is chosen cannot become larger than $\mu^{2}(\frac{\zeta_j}{|\zeta|}+t)$; so, w.v.h.p.\ the increment
that the $j$th component causes at~$j$ will not  exceed~$(\mu^{2}(\frac{\zeta_j}{|\zeta|}+t)+\eps)t|\zeta|$. Now (note also that the $j$th coordinate starts from $\zeta_j=\frac{\zeta_j}{|\zeta|}\times|\zeta|$)
we observe that
\begin{equation}
\label{eps-h}
 (\delta\mu^{-2}(h-t)-\eps)t 
> \Big(\mu^{2}\Big(\frac{\zeta_j}{|\zeta|}+t\Big)+\eps\Big)t
+ \frac{\zeta_j}{|\zeta|}
\end{equation}
for $t=\frac{\delta h}{3\mu^4 }$
and with small enough $\eps>0$ provided that
$\frac{\zeta_j}{|\zeta|}\leq \frac{\delta^2 h^2}{16\mu^6}$,
which implies~\eqref{kill_edge} with the usual large-deviation 
estimates.\footnote{Note that,
with $\eps=0$, the left-hand side of~\eqref{eps-h}  is at least 
$\frac{2\delta^2 h^2}{9\mu^6}$
while the right-hand side is 
$\frac{\zeta_j}{|\zeta|}(1+\frac{\delta h}{3\mu^2})+\frac{\delta^2 h^2}{9\mu^6}
\leq\frac{4\zeta_j}{3|\zeta|}+\frac{\delta^2 h^2}{9}$.
}
Now, set 
\[
 \psi = \frac{\delta^{2^N-2}}{16^{2^N-1}\mu^{3(2^N-2)}N^{2^N-1}}
  \text{ and } \gamma_2 = \frac{\delta }{3\mu^4 N}.
\]
Then, \eqref{kill_edge} implies~\eqref{eq_kill_the_weak}
in the following way: assume that $\zeta_{i_0}=\max_k \zeta_k$
and $\zeta_{j_0}=\min_k \zeta_k$. 
Consider also the recursion~$b_0=N^{-1}$,
$b_{k+1}=\frac{\delta^2}{16\mu^{6}} b_k^2$ and 
observe that $b_{N-1}=\psi$. Then, $\frac{\zeta_{i_0}}{|\zeta|}\geq \frac{1}{N}$
and there exists a ``path'' (along the ``oriented edges'' $(i,j)$ such that
$a_{ji}>0$) of length at most~$N-1$ from~$i_0$ to~$j_0$. 

W.l.o.g.\ assume $i_0=1$, and that the path is vertices $1, 2, \ldots, j_0$. Find the smallest $k$ for which $\zeta_{k+1}/|\zeta| \leq b_{k+1}$ (note that if it does not exist then $\zeta_{j_0}/|\zeta|>\psi$). Consider the pair of  vertices $(k,k+1)$. We know that
$$
\frac{\zeta_{k}}{|\zeta|}\ge  b_k=:h
\quad\text{and}\quad
\frac{\zeta_{k+1}}{|\zeta|}\leq b_{k+1}=\frac{\delta 
b_k^2}{16\mu^{6}}
=\frac{\delta h^2}{16\mu^{6}}.
$$
Now we can apply~\eqref{kill_edge} to conclude the proof of Lemma~\ref{l_kill_the_weak}.
\end{proof}



Next, we formulate the fact announced in the beginning
of this subsection: 
\begin{corollary}
\label{c_sigma_sigma'}
Assume that $\psi$ is the constant from Lemma~\ref{l_kill_the_weak},
and let~$\sigma'(\cdot)$ be defined as in~\eqref{df_sigma'}.
Then, $\sigma'(m_0)<\infty$ a.s.\ for any $\zeta(m_0)$
implies that $\sigma<\infty$ a.s.
\end{corollary}
\begin{proof}
Notice that, due to Lemma~\ref{l_kill_the_weak},
the fact that the set $\{\zeta: |\zeta|\leq \gamma\}$ 
is finite for any $\gamma$, and the Strong Markov
Property, there is $\eps_0>0$ such that
on $\big\{\zeta(m_0)\in D_\psi\cup \{\zeta: |\zeta|\leq \gamma\}\big\}$
there is a $\F_{m_0}$-measurable random variable~$\Xi$ such that
$\sigma\leq m_0+\Xi$ with probability at least~$\eps_0$.
The claim now follows from the usual
argument of the sort ``if you keep tossing 
a coin, it will eventually come heads''.
\end{proof}

\paragraph{Step 2: $\sigma'(\cdot)<\infty$ almost surely.} 

Recall that $\lambda_1>0$ is the largest in absolute value eigenvalue of the interaction 
matrix $\A$. For the  rest of the proof we fix an eigenvalue $\lambda\neq \lambda_1$ 
(which can be complex), 
a left eigenvector~$\bv$ corresponding to the eigenvalue~$\l$ and 
consider the process 
\begin{equation}
\label{Un-new}
U_n=\bv\cdot\zeta(n),
\end{equation}
i.e., as in Proposition~\ref{Un-subm}.

We start with proving that, roughly speaking,
the process~$|U_n|$ will eventually
outgrow~$\sqrt{n}$:
\begin{proposition}
\label{cla}
For any constant $A>0$, any initial configuration, and any~$m_0$
it holds that
$$
\P\big(\sigma'(m_0)=\infty, 
|U_{m_0+n}|
\leq A\sqrt{m_0+n} \text{ for all }n\geq 0\big)=0.
$$
\end{proposition}
\begin{proof}
Note first that since $\Re(\l)<\l_1<\alpha$ 
and  $\sigma'(m_0)\leq \sigma$ on $\{\sigma>m_0\}$, it follows from  Proposition~\ref{Un-subm} that  the process  $U_{n\wedge \sigma'(m_0)}$
is a submartingale.

It is easy to see that this submartingale has bounded increments. Indeed, 
recalling Remark~\ref{Un<C} and equation~\eqref{v-e} we have that
\begin{equation}
\label{MVcond1}
\Big| |U_{n+1}| - |U_n|\Big| \le 1,
\end{equation}
as $\zeta(n+1)=\zeta(n)\pm \be_j$ for some $j=1,2,\dots, N$.

Further, we are going to show that 
\begin{equation}
\label{MVcond2}
\E \left[\left(|U_{n+1}| - |U_n|\right)^2\| \F_n\right]\ge C\quad\text{on}\quad 
\zeta(n)\notin D_\psi\cup \{\zeta: |\zeta|\leq \gamma\},
\end{equation}
for a constant $C>0$. For that, clearly, it is enough to show the ``uniform ellipticity'' of $|U_n|$ when~$\zeta(n)$ is not in the ``bad set'': there are $\eps_0, h_0>0$ such that
\begin{equation}
\label{eq_unif_ell}
 \P_\zeta\big(\big| |U_{1}| - |U_0|\big|\geq \eps_0 \big) \geq h_0
\end{equation}
when $\zeta\notin D_\psi\cup \{\zeta: |\zeta|\leq \gamma\}$.

To prove the above, assume w.l.o.g.\ that
$$
\bv=(z_1,z_2,\dots,z_k,0,0,\ldots ,0),\quad 1\le k\le N,
\quad z_j\in \C\setminus\{0\}.
$$
Since multiplying the eigenvector by a constant $e^{i\varphi}$, $\varphi\in\R$, preserves its property of being an eigenvector and does not change $|U_n|$, we can assume w.l.o.g.\ that $z_1>0$ is real.
Consider the following two cases.

\paragraph{{\it Case 1:}} all other $z_j$s are real, which means that~$U_n$ also is. Assuming that $\zeta(n+1)=\zeta(n)\pm\be_1$, in one of the two cases $|U_{n+1}|=|U_n|+z_1$, which implies~\eqref{eq_unif_ell}.

\paragraph{{\it Case 2:}} some $z_j$ is complex, for example, $z_2=|z_2|e^{i\varphi_2}$,  where $|z_2|>0$, $\varphi_2\in (0,2\pi)\setminus\{\pi\}$.
\begin{figure}
\centering
 \includegraphics{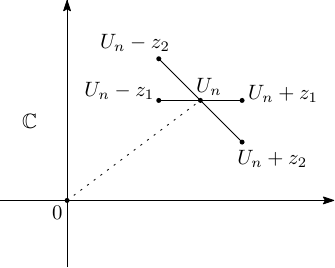}
 \caption{\small{On the proof of the uniform ellipticity in Case~2.}}
\label{f_unif_ell}
\end{figure}
In this case~$U_n$ may be complex, but the key observation is that~$z_1$ and~$z_2$ are not collinear, meaning that they ``move'' $U_n$ in different directions, as shown on Figure~\ref{f_unif_ell}. It is then 
easy to convince oneself that~\eqref{eq_unif_ell} holds in this case as well: 
if~$U_n$ is large and one of those two directions 
is (roughly) orthogonal to it, 
then moving~$U_n$ in that direction will not significantly change its absolute 
value;
however, moving~$U_n$ in the other direction
will do the job. Consequently, we have~\eqref{MVcond2}.

Now, consider a new process
\begin{equation}
\label{tilde-Un}
{\widetilde U}_n = 
 \begin{cases}
  U_{m_0+n}, & \text{ if }m_0+n<\sigma'(m_0) ,\\
  {\widetilde U}_{n-1}+1, & \text{ if }m_0+n\geq\sigma'(m_0).
 \end{cases}
\end{equation}
For this process,
we have $\big| |{\widetilde U}_{n+1}| - |{\widetilde U}_n|\big| \le 1$
and
$\E \big[\big(|{\widetilde U}_{n+1}| -
|{\widetilde U}_n|\big)^2\| 
\F_n\big]\ge C$ for \emph{all}~$n$; therefore, we can apply the law of iterated logarithms for sub-martingales, see e.g.~\cite[Lemma 2]{MV}, p.~947, with $a=0$ (which is, in turn, based on~\cite[Proposition (2.7)]{Freedman}), and this finishes the proof of Proposition~\ref{cla}.
\end{proof}

\begin{proposition}
\label{lemSPAs1}
Recall the process  $W_n$  defined in~\eqref{Wn}.
Given  $\eps>0$ the stopping time
$$\min\{n: W_n<\eps \}\wedge \sigma'(m_0)$$
is a.s.\ finite for any~$m_0$.
\end{proposition}
\begin{proof}
Recall the  process $\widetilde U_n$ defined in~\eqref{tilde-Un}. 
By Proposition~\ref{cla} we have that, with probability one,
$|\widetilde U_n|$ will  reach  the level of $A\sqrt{n}$  for any constant $A>0$. 
At the same time, it follows from Remark~\ref{Un<C} that 
$V_n\le n$, and, hence, 
$$W_n=\frac{V_n}{|U_n|^2}\le \frac{1}{A^2},$$ 
which can be made arbitrarily small by choosing $A$ sufficiently large.
\end{proof}

\begin{proposition}
\label{lemSPA4}
Let $C_1$ and $C_3$ be the constants from Proposition~\ref{Wn-supermartingale} 
and Proposition~\ref{lemSPA3} respectively.
Let $0<\eps<\min (C_1,C_3)$. If $W_{m_0}\le \eps^2$ then
$$
\P(\sigma'(m_0)=\infty\|\F_{m_0})\le \eps.
$$
\end{proposition}
\begin{proof}
Consider the stopping time 
$$
\tau(m_0)=\inf\{n>m_0:\ W_n>\eps\}.
$$

Since $\sigma'(m_0)\leq \sigma$ on $\{\sigma>m_0\}$, the process $W_{n\wedge \sigma'(m_0)}$ is a supermartingale on $\{W_n\le C_1\}$, by Proposition~\ref{Wn-supermartingale}. Applying the optional stopping theorem  to  the supermartingale 
$W_{n\wedge \sigma'(m_0)\wedge \tau(m_0)}$ gives 
that 
\begin{align}
\label{eqtaufin}
\P\big(\tau(m_0)<\min(\sigma'(m_0),\infty)\big)\le \frac{W_{m_0}}{\eps}
\leq \eps. 
\end{align}

Further, recall the process $H_n$ defined in~\eqref{Hn}.
It follows from Proposition~\ref{lemSPA3} and bound~\eqref{Vn>0} 
that 
\begin{equation}
\label{Hn-n}
\E(H_{n+1}-H_n\| \F_n)\le -\frac{C_6}{n+C_5}\quad\text{on}\quad 
\{\sigma'(m_0)>n,\, 
V_n\leq C_3|U_n|^2 
\},
\end{equation}
for some constants $C_5, C_6>0$ (and where  $C_3$ and $\gamma>0$ are  the constants from Proposition~\ref{lemSPA3}).

Denoting for short 
$$
\widetilde H_n:=H_{\min(n,\sigma'(m_0),\tau(m_0))}
$$
we can rewrite equation~\eqref{Hn-n} as follows
$$
\E(\widetilde H_{n+1}-\widetilde H_n\| \F_n)\le -\frac{C_6}{n+C_5}
1_{\{\tau(m_0)\wedge \sigma'(m_0)>n\}}.
$$
Taking the expectation and summing up from $m_0$ to $m_0+M$, we get, since 
$\widetilde H_n\ge 0$, that 
\begin{align*}
- \E \widetilde H_{m_0}\le \E \widetilde H_{m_0+M}-\E \widetilde H_{m_0}\le 
-\sum_{n=m_0}^{m_0+M}\frac{C_6}{n+C_5}\P(\min(\tau(m_0),\sigma'(m_0))>n).
\end{align*}
Now, if $\P(\min(\tau(m_0),\sigma'(m_0))>n)$ does not decay to zero
as $M\to\infty$, 
then the quantity in the right-hand side of the preceding display 
 converges to minus infinity, which is impossible. 
Hence
$\P(\min(\tau(m_0),\sigma'(m_0))>n)\downarrow 0$ implying that 
$\P(\min(\tau(m_0),\sigma'(m_0))=\infty)=0$. As 
a result, from~\eqref{eqtaufin}, we obtain
\begin{align*}
\P(\sigma'(m_0)=\infty)&=\P(\sigma'(m_0)=\infty,\tau(m_0)=\infty)
+\P(\sigma'(m_0)=\infty,\tau(m_0)<\infty)\\
&
\le 0+\P(\tau(m_0)<\infty)\le\eps.
\end{align*}
\end{proof}

Now the a.s.\ finiteness of~$\sigma'(\cdot)$
follows from Propositions~\ref{lemSPAs1}
and~\ref{lemSPA4}, since $\eps$ can be chosen arbitrary small.
Corollary~\ref{c_sigma_sigma'} then implies
 Lemma~\ref{sigma_finite}.

\section{Appendices}
\subsection{Appendix 1: Asymptotic behaviour of the total transition rate}
\label{rate_behaviour}
In this section we would like to present additional results on asymptotic behaviour of the total rate $R_n$ defined in~\eqref{Rn}.
Note that on $\{\zeta(n)>0\}$, equations~\eqref{Rn} and~\eqref{drift1} imply that 
\begin{equation}
\label{Rn-drift}
\begin{split}
\E\left(R_{n+1}-R_n|\F_n\right)=\frac{{\bf 1}\cdot(\alpha\I+\A)(\alpha \I-\A)\zeta(n)}{
{\bf 1}\cdot(\alpha\I+\A)\zeta(n)}&=\frac{{\bf 1}\cdot(\alpha\I-\A)(\alpha \I+\A)\zeta(n)}
{{\bf 1}(\alpha\I+\A)\zeta(n)}\\
&\leq \frac{\alpha{\bf 1}\cdot(\alpha\I+\A)\zeta(n)}{{\bf 1}\cdot(\alpha\I+\A)\zeta(n)}=\alpha,
\end{split} 
\end{equation}
where we used the fact that matrices $\alpha\I+\A$ and $\alpha \I-\A$ commute. 
The bound~\eqref{Rn-drift} implies that, while the process is away from the boundary, i.e. $\zeta(n)>0$, then, with high probability, $R_n\leq (\alpha+\eps)n$ for  sufficiently small $\eps>0$.  We skip the details of the corresponding proof. Instead, we are going to show  a more  subtle fact. Namely,  if $\l_1<\alpha$, then $R_n$ can be majorized by a random process which mean jump equals {\it exactly} $\alpha$. Precise statements are given in Propositions~\ref{lem-TR},~\ref{lem-bu} and~\ref{lem-LLN} below. 

\begin{proposition}
\label{lem-TR}
Suppose that $\l_1<\alpha$ and define
\begin{align*}
T_n=\alpha\bu^T\, \zeta(n)=\alpha \bu \cdot \zeta(n) ,\ \text{where }\ 
\bu=\left(\alpha\I+\A^\T\right) \left(\alpha\I-\A^\T\right)^{-1}{\bf 1}\in\R_+^N.
\end{align*}
Then $T_n\ge R_n$.
\end{proposition}
\begin{proof}[Proof of Proposition~\ref{lem-TR}]
Recall that $\l_1>0$ is the largest in absolute value eigenvalue of the matrix $\A$, and, hence, of the transposed matrix $\A^\T$. Since $\l_1<\alpha$ the matrix $\alpha\I-\A^{\T}$ is invertible. Therefore, both the vector $\bu$ and the quantity $T_n$ are properly defined. Further, observe that
$$
\left(\left[\I-\A^{\T}\alpha^{-1}\right]^{-1}\right)^{\T}=
\left(\left[\I-\A^{\T}\alpha^{-1}\right]^{\T}\right)^{-1}=(\I-\A\alpha^{-1})^{-1}
=\I+\sum_{k=1}^{\infty}\A^k\alpha^{-k}.
$$
Then we get the following
\begin{align*}
T_n-R_n&=\alpha{\bf 1}\cdot \left([\alpha\I-\A^{\T}]^{-1}\right)^{\T}\left(\alpha\I+
\A^{\T}\right)^{\T}\zeta(n)-{\bf 1}\cdot(\alpha\I+\A)\zeta(n)
\\
&={\bf 1}\cdot\left(\I-\A\alpha^{-1}\right)^{-1}(\alpha\I+\A)\zeta(n)-{\bf 1}\cdot(\alpha\I+\A)\zeta(n)
\\
&=
{\bf 1}\cdot \left(\left(\I-\A\alpha^{-1}\right)^{-1}-\I \right)(\alpha\I+\A)\zeta(n)
\\
&={\bf 1}\cdot \left(\sum_{k=1}^{\infty}\A^k\alpha^{-k}\right)(\alpha\I+\A)\zeta(n) \ge 0,
\end{align*}
as claimed.
\end{proof}

Now we will show that $T_n$, roughly speaking, behaves like a random walk with a constant drift.
\begin{proposition}
\label{lem-bu}
Under assumptions of Proposition~\ref{lem-TR}
$$
\E(T_{n+1}-T_{n}\|\F_n)=\alpha \quad\text{on}\quad\{\zeta(n)>0\}.
$$
\end{proposition}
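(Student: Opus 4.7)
The plan is straightforward: reduce $\E(T_{n+1}-T_n\mid\F_n)$ to a deterministic multiple of $R_n$, using the fact that $\bu$ was engineered to make this work.

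First I would apply linearity of conditional expectation to write
$$
\E(T_{n+1}-T_n\mid\F_n)=\alpha\,\bu\cdot\E(\zeta(n+1)-\zeta(n)\mid\F_n),
$$
and then invoke the drift formula \eqref{drift1}, which on $\{\zeta(n)>0\}$ gives
$$
\E(T_{n+1}-T_n\mid\F_n)=\frac{\alpha\,\bu^{\T}(\alpha\I-\A)\zeta(n)}{R_n}.
$$
So everything reduces to identifying the row vector $\bu^{\T}(\alpha\I-\A)$. Rewriting this as $\bigl((\alpha\I-\A^{\T})\bu\bigr)^{\T}$ and substituting the definition $\bu=(\alpha\I+\A^{\T})(\alpha\I-\A^{\T})^{-1}\mathbf{1}$, I would use the fact that $\alpha\I-\A^{\T}$ and $\alpha\I+\A^{\T}$ are polynomials in $\A^{\T}$ and hence commute. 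That lets the factor $\alpha\I-\A^{\T}$ cancel with its inverse, leaving
$$
(\alpha\I-\A^{\T})\bu=(\alpha\I+\A^{\T})\mathbf{1},
\qquad\text{i.e.}\qquad
\bu^{\T}(\alpha\I-\A)=\mathbf{1}^{\T}(\alpha\I+\A).
$$

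Plugging this back in, the numerator becomes $\alpha\,\mathbf{1}^{\T}(\alpha\I+\A)\zeta(n)$, which by the definition of $R_n$ in \eqref{Rn} is exactly $\alpha R_n$. The ratio is therefore $\alpha$, as required. Note that the hypothesis $\l_1<\alpha$ is used only to guarantee that $(\alpha\I-\A^{\T})^{-1}$ exists (already exploited in Proposition~\ref{lem-TR} to define $\bu$); once $\bu$ is in hand, the identity is purely algebraic.

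I do not foresee a genuine obstacle here: the whole content of the proposition is the commutation identity that cancels $(\alpha\I-\A^{\T})^{-1}$ against $(\alpha\I-\A^{\T})$. The only thing to be slightly careful about is the transpose bookkeeping, i.e.\ keeping track of which objects live as row vectors and which as column vectors when moving the matrix between $\bu$ and $\zeta(n)$; otherwise the computation is a one-line algebraic identity after substituting the drift expression.
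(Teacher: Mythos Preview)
Your proposal is correct and follows essentially the same route as the paper's own proof: apply the drift formula~\eqref{drift1} to reduce to $\alpha\,\bu^{\T}(\alpha\I-\A)\zeta(n)/R_n$, then verify the algebraic identity $\bu^{\T}(\alpha\I-\A)=\mathbf{1}^{\T}(\alpha\I+\A)$ via the commutation of $\alpha\I\pm\A^{\T}$. Your transpose bookkeeping is in fact slightly cleaner than the paper's version, but the argument is the same.
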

\begin{proof}[Proof of Proposition~\ref{lem-bu}]
In all equations in the proof of the proposition we assume that $\zeta(n)>0$. Then, equations~\eqref{Rn} and~\eqref{drift1} imply that
$$
\E(\zeta(n+1)-\zeta(n)\|\F_n)=\frac{(\alpha\I-\A) \zeta(n)}{{\bf 1}\cdot(\alpha\I+\A)\zeta(n)},
$$
therefore
$$
\E(T_{n+1}-T_{n}\|\F_n)=\frac{\alpha\bu\cdot (\alpha\I-\A) \zeta(n)}{{\bf 1}\cdot(\alpha\I+\A)\zeta(n)}.
$$
Observe thatt
\begin{align*}
\bu\cdot(\alpha\I-\A)&=\left[\left(\alpha\I+\A^{\T}\right)\left(\alpha\I-\A^{\T}\right)^{-1}{\bf 1}\right]^{\T} (\alpha\I-\A)\\
&={\bf 1}^\T(\alpha\I-\A)^{-1} (\alpha\I+\A)(\alpha\I-\A)\\
&={\bf 1}^\T(\alpha\I+\A)={\bf 1}\cdot(\alpha\I+\A),
\end{align*}
since $\alpha\I+\A$ and $\alpha\I-\A$ commute. Therefore,
$$
\E(T_{n+1}-T_{n}\|\F_n)=\frac{\alpha{\bf 1}\cdot(\alpha\I+\A)\zeta(n)}{{\bf 1}\cdot(\alpha\I+\A)\zeta(n)}=\alpha,
$$
as required. The proposition is proved.
\end{proof}

The next statement, which is a sort of a strong law of large numbers, is adapted from~\cite[Lemma 6]{ShV19}.
\begin{proposition}
\label{lem-LLN}
Under assumptions of Proposition~\ref{lem-TR}
$$
\lim_{n\to\infty} \left|\frac{T_n}{n}-\alpha\right| 1_{\{\sigma=\infty\}}= 0 \text{ a.s.},
$$
where $\sigma$ is the stopping time defined in~\eqref{sigma}.
\end{proposition}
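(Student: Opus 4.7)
The plan is to exploit the fact that Proposition~\ref{lem-bu} essentially says $T_n-\alpha n$ is a martingale on the non-extinction event, while the jumps of $T_n$ are uniformly bounded. A martingale strong law then yields the claim.

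First, I would introduce the stopped process
\begin{equation*}
\widetilde M_n := T_{n\wedge\sigma}-\alpha(n\wedge\sigma),\qquad n\ge 0,
\end{equation*}
where $\sigma$ is the extinction time defined in~\eqref{sigma}. Using Proposition~\ref{lem-bu} and the standard optional stopping identity,
\begin{equation*}
\E\bigl(\widetilde M_{n+1}-\widetilde M_n\big|\F_n\bigr)=\bigl[\E(T_{n+1}-T_n\|\F_n)-\alpha\bigr]\,1_{\{\sigma>n\}}=0,
\end{equation*}
so $(\widetilde M_n)_{n\ge 0}$ is an $(\F_n)$-martingale.

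Next, I would show that the increments of $\widetilde M_n$ are uniformly bounded. The chain $\zeta(n)$ jumps only by $\pm \be_i$ for some $i$, so on the event $\{\sigma>n\}$ the jump of $T_n=\alpha\bu\cdot\zeta(n)$ equals $\pm\alpha u_i$ for some $i\in\{1,\dots,N\}$, where $u_i$ is the $i$-th coordinate of the vector $\bu$ defined in~\eqref{Tdef}. Since $\bu\in\R_{+}^N$ is a fixed deterministic vector, the constant $C:=\alpha\max_{i}u_i+\alpha$ yields $|\widetilde M_{n+1}-\widetilde M_n|\le C$ for all $n$. In particular, the conditional variances of the martingale differences are bounded by $C^2$, so $\sum_{n\ge 1} n^{-2}\E\bigl[(\widetilde M_{n+1}-\widetilde M_n)^2\bigr]<\infty$.

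By the martingale strong law (for instance Chow's theorem, or Azuma-Hoeffding combined with Borel-Cantelli) this implies
\begin{equation*}
\frac{\widetilde M_n}{n}\longrightarrow 0\qquad\P\text{-a.s.}
\end{equation*}
On the event $\{\sigma=\infty\}$ we have $n\wedge\sigma=n$, hence $\widetilde M_n=T_n-\alpha n$ for every $n$, and therefore $T_n/n\to\alpha$ almost surely on $\{\sigma=\infty\}$, which is exactly the assertion of the proposition.

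I do not foresee a genuine obstacle: Propositions~\ref{lem-TR} and~\ref{lem-bu} have already done the algebraic work of turning $T_n$ into an object with constant drift, and the boundedness of the jumps of $T_n$ is immediate from its definition. The only point one has to be a little careful about is the stopping, which is why one works with $\widetilde M_n$ rather than $T_n-\alpha n$ directly; this avoids worrying about what happens to $T_n$ once some coordinate of $\zeta$ hits zero (where the identity of Proposition~\ref{lem-bu} is no longer asserted).
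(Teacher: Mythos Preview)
Your proposal is correct and essentially identical to the paper's own argument: the paper defines the stopped process $\hat T_n$ (which coincides with your $\widetilde M_n$), observes it is a martingale with bounded jumps, and then applies Azuma--Hoeffding together with Borel--Cantelli to conclude $\hat T_n/n\to 0$ a.s., from which the statement on $\{\sigma=\infty\}$ follows immediately.
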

\begin{proof}[Proof of Proposition~\ref{lem-LLN}] Let 
\begin{equation*}
\hat T_n=\begin{cases}
T_n-\a n&\text{if } n<\sigma,\\
T_\sigma-\a \sigma&\text{if } n\ge \sigma.
\end{cases}
\end{equation*}
Then $\hat T_n$ is a martingale with jumps bounded by some $c\in(0,\infty)$, since $\zeta(n)$ has uniformly bounded increments. Fix an $\eps>0$. By the Azuma-Hoeffding inequality, we have that 
\begin{equation*}
\P(|\hat T_n-\hat T_0|\ge \eps n)\le 2\,e^{-\frac{n\, \eps^2}{2c^2}},
\end{equation*}
and by the Borel-Cantelli lemma the event above occurs finitely often. Since $\eps>0$ is arbitrary and $\hat T_0/n\to 0$ we get that $\lim_{n\to\infty} \hat T_n/n=0$ a.s. Next,
\begin{align*}
\left|T_n/n -\a \right|1_{\{\sigma=\infty\}}
=
\left|\hat T_n/n \right|1_{\{\sigma=\infty\}}
\le \left|\hat T_n/n \right| \to 0,
\end{align*}
finishing the proof.
\end{proof}

\subsection{Appendix 2: examples} \label{sec-examples}
In this section we provide some examples. Suppose that the interaction matrix is $\A=\beta\A_G$, where $\beta>0$ is a given constant and $\A_G$ is the adjacency matrix of a {\it non-directed} connected graph with $N\geq 2$ vertices and a constant vertex degree $d$. The latter means that each
 vertex is connected exactly to $d$ other vertices. In this case $d$ is the largest eigenvalue of the adjacency matrix $\A_G$ (i.e. the largest eigenvalue of the graph), so that $\l_1=\beta d$ is the largest eigenvalue of the interaction matrix $\A$. It is convenient to choose the corresponding eigenvector as follows 
$\bv_1={\bf 1}=\sum_{i=1}^N\be_i\in \Z_{+}^N$. 
Then the process $V_n$ defined in the general case by equation~\eqref{Sn} becomes
\begin{equation*}
V_n=\zeta_1(n)+\cdots  +\zeta_N(n).
\end{equation*}
\begin{remark}
{\rm Note that in this special case 
 the process $V_n$ behaves as a simple random walk, that is  
\begin{equation*}
\begin{split}
\P(V_{n+1}-V_n=1|\zeta(n)>0)&=\frac{\alpha }{\alpha+\beta d}\\
\P(V_{n+1}-V_n=-1|\zeta(n)>0)&=\frac{\beta d}{\alpha+\beta d}
\end{split}
\end{equation*}
and the total rate $R_n$ (defined in~\eqref{Rn}) is proportional to $V_n$, that is 
$R_n=(\alpha+\beta d)V_n$ on $\{\sigma>n\}$.
}
\end{remark}
Now, let $G=(V, E)$ be a complete graph with $N$ vertices. This is a special case of a regular graph with the constant vertex degree $d=N-1$. It is easy to see that in this case the number of possible limit configurations is $N$. The corresponding interaction matrix has only two different eigenvalues, i.e. $\l_1=(N-1)\beta$ and $\l_2=-\beta$. The eigenvalue $\l_1$ is of multiplicity $1$, and the other eigenvalue $\l_2$ is of multiplicity $d = N-1$. The set of corresponding eigenvectors can be chosen as follows:
$$
\bv_1={\bf 1}\quad\text{and}\quad \bv_i = \be_1 - \be_ i ,\, i = 2, \ldots , N.
$$
If $\alpha>(N-1)\beta$, then, by Remark~\ref{linear-time}, the process $V_{n\wedge\sigma}$ 
is a non-negative supermartingale with a strictly negative mean jump, and, hence,  
the first extinction occurs in time linear in $V_0=\zeta_1(0)+\cdots  
+\zeta_N(0)$. If $\alpha<(N-1)\beta$, then any process $\bv_i\cdot \zeta(n)$, 
$i=2,\ldots ,N,$ can be used to construct the process $U_n$ (see~\eqref{Un}).

For example, if $N=2$, then we get a special case of the model studied in~\cite{ShV19}. 
In this particular case we have the following 
\begin{align*}
\lambda_1&=\beta,\, \bv_1=(1,1)^\T,
\\
\lambda_2&=-\beta, \, \bv_2=(1,-1)^\T, 
\\
V_n&=\bv_1\cdot\zeta(n)=\zeta_1(n)+\zeta_2(n),
\\
U_n&=\bv_2\cdot\zeta(n)=\zeta_1(n)-\zeta_2(n)
.
\end{align*}

For an illustration, we present in Figure~\ref{complete11} a simulation of the DTMC $\zeta(n)$ in the case of the complete graph with $N=11$. The plot shows positions of the components of the process as functions of time. One can see that eventually only a single component survives. Similar simulations in the case of a complete graph with two vertices are shown in Figures~\ref{figB} and~\ref{figD}. Table~\ref{Table} provides a summary of the simulation results.

\begin{figure}[H]
\centering
 \includegraphics[scale=0.3]{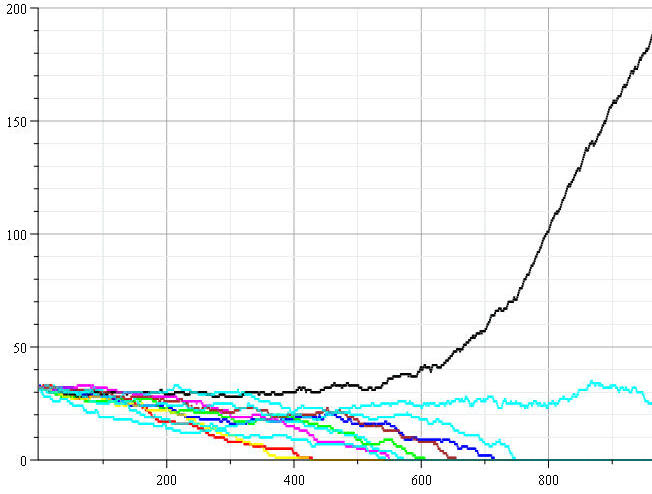}
 \vspace*{-4mm}
 \caption{\small{Simulation of the DTMC $\zeta(n)$ on a complete graph with $11$ vertices.}}
\label{complete11}
\end{figure}

\begin{figure}[H]
\centering
 \includegraphics[scale=0.2]{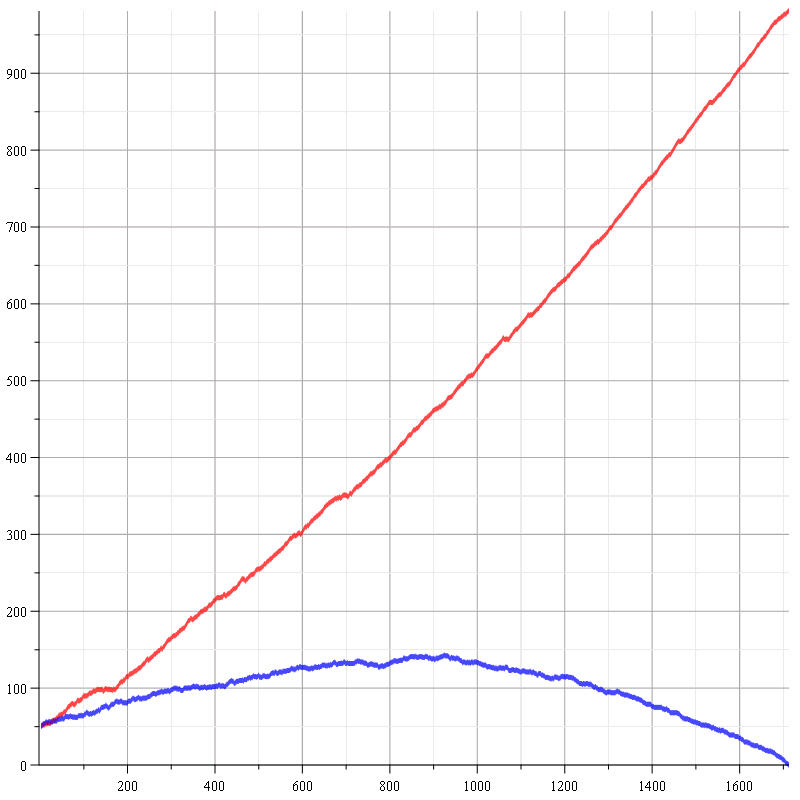}
\quad
 \includegraphics[scale=0.2]{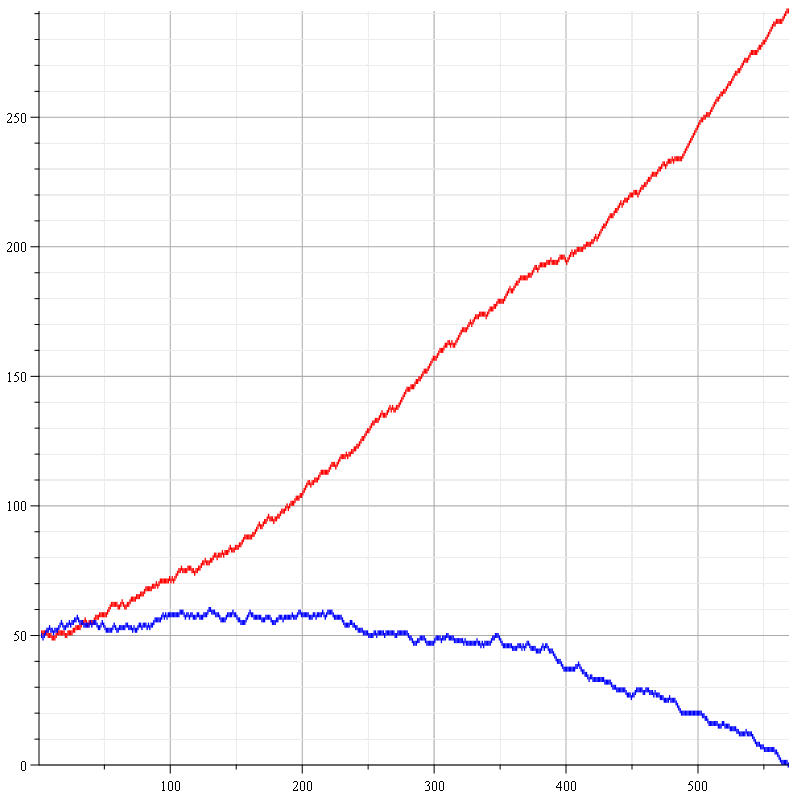}
 \caption{ \small{Simulation of lifetimes of the components of the DTMC $\zeta(n)$ on a complete graph with $2$ vertices. Parameters: $\alpha=1$, $\beta=0.3$ (left) and $\beta=0.5$ (right)}}
\label{figB}
\end{figure}

\begin{figure}[H]
\centering
 \includegraphics[scale=0.2]{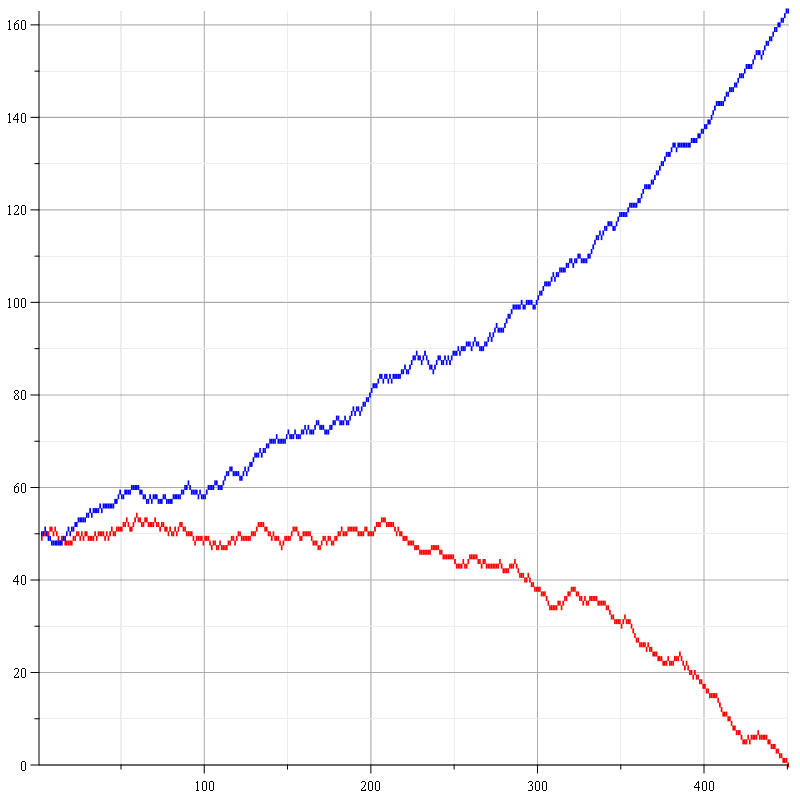}
\quad
 \includegraphics[scale=0.2]{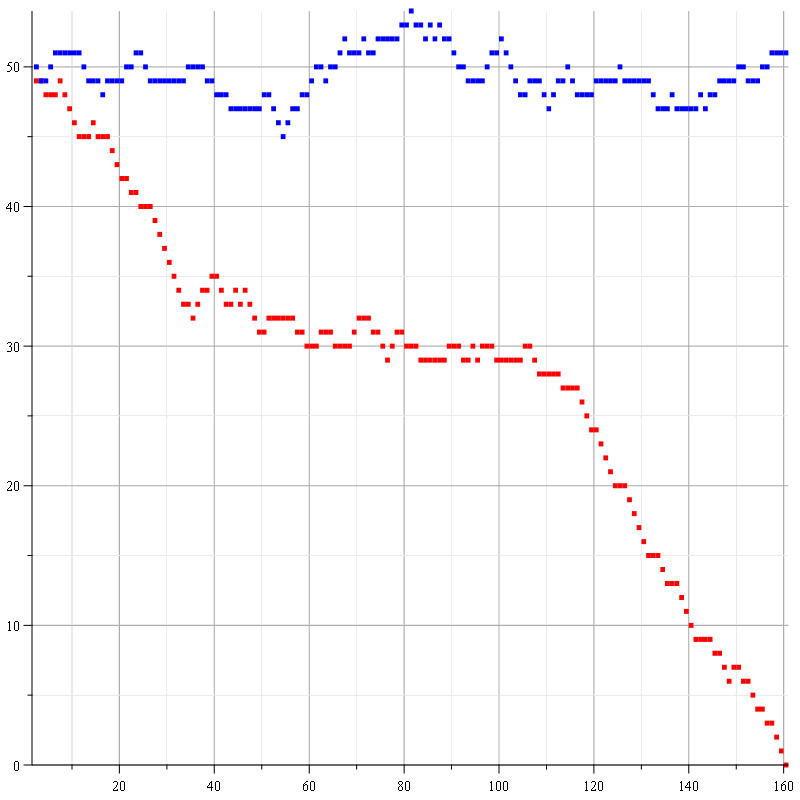}
 \caption{\small{The embedded process $\zeta(n)$ on a complete graph with $2$ vertices. $\alpha=1$, $\beta=0.7$ (left) and $\beta=1.5$ (right)}}
\label{figD}
\end{figure}

\begin{table}[H]
\centering 
\begin{tabular}{|c|c|c|c|c|}
\hline
 & $\beta=0.3$ & $\beta=0.5$ & $\beta=0.7$ & $\beta=1.5$\\
\hline
$\alpha=1$ & $\sigma=1713$ & $\sigma=570$ & $\sigma=500$ & $\sigma=160$\\
\hline
\end{tabular}
\caption{ Sample extinction times $\sigma$, $\zeta(0)=(50, 50)$ }
\label{Table}
\centering
\end{table}

Table~\ref{Table2} gives numbers of possible limit configurations, the Perron-Frobenius eigenvalue $\l_1$ and a variant of the eigenvalue $\l_N$ for the cycle, line and star graphs (with $N\geq 2$ vertices). All graphs are non-directed, and 
$v\sim u$ denotes that vertices $v$ and $u$ are connected by an edge.

\begin{table}[H]
\centering 
 \begin{tabular}{|c|c|c|c|}
\hline
 & & \\
 Cycle graph & Line graph & Star graph \\
 $1\sim 2\sim\cdots  \sim N\sim 1$& $1\sim 2\sim\cdots  \sim N-1\sim N$& $1\sim 
i$, $i=2,\ldots ,N-1$\\
 & & 
($1$ is the central vertex)\\
\hline
 & & \\
$M(G_N)=F_{N-1}+F_{N-3}-1$ & $M(G_N)=F_N-1$ & $M(G_N)=2^{N-1}$ \\
 & & \\
\hline
 & & \\
 $\l_1=2\beta$ & $\l_1=2\beta\cos\left(\frac{\pi }{N+1}\right)$ & $\l_1=\beta\sqrt{N-1}$\\
 & & \\
\hline
 & & \\
$\l_N=-2\beta \cos\left(\frac{\pi \, {\bf 1}_{\{N\text{ is odd}\}}}{N}\right)$ & $\l_N=-2\beta\cos\left(\frac{\pi }{N+1}\right)$ & $\l_N=-\beta\sqrt{N-1}$\\
 & & \\
\hline
 \end{tabular}
 \caption{ {\small $M(G_N)$ denotes the number of the possible limit configurations for a graph $G_N$. $F_N$ is the~$N$-th Fibonacci number.}}
\label{Table2}
\centering
\end{table}

\subsection{Appendix 3: conjecture for the model with immigration}
First of all, note that motivation for the current paper comes from~\cite{ShV19}, where we considered a similar model only in the case where $N=2$. In the model of~\cite{ShV19} we allowed ``immigration'', i.e.\ $q_{\bx,\bx+\be_i}=\alpha_i x_i+\l_i$, where $\l_i>0$ is the immigration rate, and we also allowed $\alpha_i$ to be different. On the other hand, in~\cite{ShV19} we demanded that $a_{12}>0$ and $a_{21}>0$, which ensured that matrix $\A$ is irreducible. In fact, possible non-reducibility of $\A$ (and, hence, non-connectedness of $G$) causes a substantial challenge in our current model as we have to deal with multiple possibilities for the structure of the underlying graph, and use the recursion in the proof. 

Including the immigration rate into the current model with arbitrary $N$ is straightforward. However, some computations will become more tedious, and we chose not to do so. At the same time, we believe it is possible to extend the results of Theorem~\ref{T1} of the current paper and \cite[Theorem~2]{ShV19} as follows.

\begin{conjecture}
Suppose that we are given the interaction matrix satisfying Definition~\ref{D1}, and the transition rates are given by~\eqref{rates} with the correction that $q_{\bx\by}=\alpha x_i+\l_i$, $\by=\bx+\be_i$, $\l_i\ge 0$. Then there exists a.s.\ a time $T$ and a subset of vertices ${\cal I}=\{i_1,i_2,\dots, i_K\}\subset V$ satisfying the conditions of Theorem~\ref{T1}, such that for all $t\ge T$
$$
X_i(t)\to\infty \text{ if and only if }i\in {\cal I}.
$$
Moreover, for each $j$ such that $\l_j>0$, either $j\in{\cal I}$, or $j\notin{\cal I}$ but there is an $i\in{\cal I}$ such that $i\dir j$. Finally, for all $j\notin {\cal I}$
$$
\liminf_{t\to\infty} X_j(t)=0 \text{ and } \limsup_{t\to\infty} X_j(t)=1 \quad \text{a.s.}
$$
\end{conjecture}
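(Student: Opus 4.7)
My approach would be to mimic the inductive structure of the proof of Theorem~\ref{T1}, replacing each auxiliary lemma by its immigration analogue. Writing $\boldsymbol{\l}=(\l_1,\ldots,\l_N)^\T$ for the immigration vector and temporarily renaming the eigenvalues of $\A$ by $\mu_1,\ldots,\mu_N$ to avoid a notational clash with the conjecture, the drift identity~\eqref{eqmart} becomes
\[
\E(\bv_i\cdot\zeta(n+1)-\bv_i\cdot\zeta(n)\|\F_n)
=\frac{(\alpha-\mu_i)\bv_i\cdot\zeta(n)+\bv_i\cdot\boldsymbol{\l}}{R_n}.
\]
The immigration term is $O(1/R_n)$, summable once $R_n\to\infty$, and does not affect the leading asymptotic behaviour of any of the semimartingales in Section~\ref{sec-prelim}; Propositions~\ref{lem-TR}--\ref{lem-LLN} likewise pick up only lower-order corrections.

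I would then locate a strongly connected source subgraph $G'$ via Lemma~\ref{lemgraphtheory} and argue by induction on $N$. The easy case is $|G'|=1$, say $G'=\{v\}$: then $v$ has no predators, so $X_v\to\infty$ and I place $v\in\cal I$. For each $w$ with $v\dir w$, the death rate at $w$ is at least $a_{wv}X_v(t)\to\infty$, so a direct Borel--Cantelli argument on the embedded chain gives $\zeta_w\le 1$ eventually. If $\l_w=0$, this yields permanent extinction (an adaptation of Lemma~\ref{lem_N2}, whose P\'olya-urn coupling is only strengthened by extra immigration into $v$); if $\l_w>0$, immigration returns $\zeta_w$ to~$1$ recurrently, producing $\liminf=0$, $\limsup=1$. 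After deleting $v$ and all such $w$, apply the induction hypothesis to the restricted LCP (the restriction principle of Lemma~\ref{subgr} extends trivially to the immigration setting, since deleting a vertex only removes some death contributions from the remaining dynamics).

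The hard case is $|G'|\ge 2$. The core obstacle is extending Lemma~\ref{sigma_finite}: with positive immigration on every vertex of $G'$ the restricted process need not reach the boundary, so I would replace $\sigma$ by $\sigma_M=\inf\{n:\min_i\zeta_i(n)\le M\}$ for a large $M$ and prove $\sigma_M<\infty$ a.s. When $\mu_1\ge\alpha$ this follows from the supermartingale $\bv_1\cdot\zeta(n\wedge\sigma_M)$ with its summable immigration perturbation. When $\mu_1<\alpha$, the lower bound of Proposition~\ref{lem-Un} on $\E(|U_{n+1}|^2\|\F_n)$ is preserved to leading order because the immigration cross-term $\bv_N\cdot\boldsymbol{\l}$ contributes only $O(|U_n|/R_n)$ while the main term is $O(|U_n|^2/R_n)$, and Proposition~\ref{lem-nonzero} carries through verbatim; comparing with the trivial polynomial bound $|U_n|^2\le C_3 n^2$ again forces some coordinate to visit a bounded region infinitely often. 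Once a component is small, a strong-Markov / 0-1 argument concludes: if its immigration rate is $0$, a positive probability of reaching~$0$ before escape combined with iteration yields a.s.\ permanent extinction; if its rate is positive, it is carried along the recursion as a vertex marked for eventual domination.

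I expect the main obstacle to be the joint bookkeeping of this recursion in the immigration case: a vertex $w$ with $\l_w>0$ marked as dominated may not settle into the $\limsup=1$ regime until a later step, when a genuine predator finally enters $\cal I$. Verifying that \emph{every} such $w$ eventually acquires a predator in $\cal I$, consistent with the structural condition of the conjecture, requires tracking across recursion steps — in particular one must prove an a priori structural claim that every vertex removed as ``dominated'' is a downstream neighbour of some source subgraph which will contribute a vertex to $\cal I$. Once this structural invariant is established, the remainder of the argument is a direct combination of the above semimartingale estimates with the induction, and the assertion $\limsup X_j=1$ follows from the elementary observation that, for a dominated $j$, the upward probability from state~$1$ is summable along the trajectory of the predator.
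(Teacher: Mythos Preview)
The statement you are attempting to prove is labelled \textbf{Conjecture} in the paper, and the authors provide no proof of it. They say only that ``we believe it is possible to extend the results of Theorem~\ref{T1} of the current paper and \cite[Theorem~2]{ShV19}'' to the immigration setting, noting that ``some computations will become more tedious'' and that they ``chose not to do so''. There is therefore no proof in the paper to compare your proposal against.

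As for your sketch itself, the overall plan---perturb the semimartingales of Section~\ref{sec-prelim} by $O(1/R_n)$ immigration terms and rerun the induction of Theorem~\ref{T1}---is the natural one, and your identification of the main obstacle (tracking dominated vertices with $\l_j>0$ across recursion steps) is honest. But there is a more basic structural gap you are glossing over. In the no-immigration proof, the induction works because once a component hits~$0$ it is \emph{permanently} removed: the graph genuinely loses a vertex and Lemma~\ref{subgr} applies to a strictly smaller system. With $\l_j>0$ this fails: proving $\sigma_M<\infty$ only says some coordinate visits a bounded region, not that it can be deleted. Your proposed remedy---carry such a vertex along ``marked for eventual domination''---means the restricted process on the remaining vertices is \emph{not} an autonomous LCP (the marked vertex still contributes to death rates of its neighbours), so the restriction principle no longer reduces~$N$. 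You would need a quantitative statement that a component trapped near zero contributes only a summable perturbation to its neighbours' dynamics, uniformly over the subsequent recursion; this is not addressed.

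A second gap is in the $|G'|\ge 2$, $\mu_1<\alpha$ case. Your claim that the immigration cross-term in the analogue of Proposition~\ref{lem-Un} is negligible because it is $O(|U_n|/R_n)$ versus $O(|U_n|^2/R_n)$ is only useful when $|U_n|$ is large; the whole point of Proposition~\ref{lem-nonzero} is to handle the regime where $|U_n|$ is small, and there the cross-term is of the same order as the main term. Making the submartingale argument go through uniformly is precisely the ``tedious computation'' the authors allude to, and it is not automatic.
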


\section*{Acknowledgement}
We are 
 grateful to E.~Crane, A.~Holroyd, and J.~F.~C.~Kingman for useful remarks on the paper.

\end{document}